\numberwithin{equation}{section}
\newtheorem{theorem}[equation]{Theorem}
\newtheorem{proposition}[equation]{Proposition}
\newtheorem{lemma}[equation]{Lemma}
\newtheorem{corollary}[equation]{Corollary}
\theoremstyle{remark}
\theoremstyle{definition}
\def\XXint#1#2#3{{\setbox0=\hbox{$#1{#2#3}{\int}$}
	\vcenter{\hbox{$#2#3$}}\kern-.5\wd0}}
\newcommand{\N}{\mathbb N}
\newcommand{\C}{\mathbb C}
\newcommand{\R}{\mathbb R}
\newcommand{\p}{\partial}
\newcommand{\e}{\epsilon}
\newcommand{\defun}{{\varphi}}
\newcommand{\al}{\alpha}
\def\eps{\epsilon}
\def\half{{1 \over 2}}
\newcommand{\Lip}{\operatorname{L}}
\begin{document}
\title[Conformal equivalence of visual metrics]{Conformal equivalence of  visual metrics in pseudoconvex domains}


\author{Luca Capogna}
 \thanks{L.C.  partially funded by NSF award DMS 1503683. }
\address{Department of Mathematical Sciences, Worcester Polytechnic Institute, Worcester, MA 01609}\email{lcapogna@wpi.edu}

\thanks{E.L.D.~is supported by the Academy of Finland project no. 288501. }

\author{Enrico Le Donne}
\address{
Department of Mathematics and Statistics, University of Jyv\"askyl\"a, 40014 Jyv\"askyl\"a, Finland}
\email{ledonne@msri.org}


\renewcommand{\subjclassname}{%
 \textup{2010} Mathematics Subject Classification}
\subjclass[]{32T15; 
32Q45; 
32H40;  
53C23; 
53C17. 
}

\date{February 28, 2017}

\begin{abstract}  
We refine estimates introduced by Balogh and Bonk, to show that the boundary extensions of isometries between smooth strongly pseudoconvex domains in $\C^n$ are conformal with respect to the sub-Riemannian metric induced by the Levi form. As a corollary we obtain an alternative proof of a result of  Fefferman  on smooth extensions of biholomorphic mappings between pseudoconvex domains. The proofs are inspired by Mostow's proof of his rigidity theorem and are based on the asymptotic hyperbolic character of the Kobayashi or Bergman metrics and on the Bonk-Schramm hyperbolic fillings.
   \end{abstract}

\maketitle

\tableofcontents

\section{Introduction} 
Let $D \subset \mathbb C^n (n\geq2) $  be
a strongly pseudo-convex domain with $C^\infty$-smooth boundary. 
Denote by  $d_K$ the distance function corresponding  to a Finsler structure $K$
satisfying suitable estimates, see \eqref{bb-1.4}.  For example, one may consider  
  the  Bergman metric or the Kobayashi metric or the Carath\'eodory metric.
   In  \cite{Balogh-Bonk,MR1679982},  Balogh and Bonk have proved that the metric space $(D,d_K)$ is  hyperbolic in the sense of Gromov and its visual boundary coincides with the topological boundary $\p D$. They also show that the Carnot-Carath\'eodory metric  $d_{CC}$ corresponding to the Levi form on $\p D$,  determines the canonical class of snowflake equivalent visual metrics on  $\p D$. As a consequence, results from the theory of Gromov hyperbolic spaces can be immediately  applied in this setting. Among these we recall that every quasi-isometry between  such spaces extends to a quasi-conformal map between the visual boundaries, endowed with their families of visual metrics, see for instance \cite{MR1086655,Bridson:book} and references therein.

Our main contribution is to show that  extensions of isometries are actually  diffeomorphisms that are conformal with respect to the Carnot-Carath\'eodory metric. We only need to show that the extension is $1$-quasi-conformal, as the smoothness then follows from the recent results in \cite{CLDO}.

As in \cite{Balogh-Bonk}, our  strategy involves the  Bonk-Schramm hyperbolic filling metric $g$ defined in \eqref{filling-metric-cc}. This metric provides a stepping stone to connect the Carnot-Carath\'eodory distance, defined on the boundary by  the Levi form (see Section \ref{sec:Pseudoconvex}), with the invariant metric defined in the domain.

 \begin{theorem}\label{main} Let $D_1,D_2\subset \C^n$ be strongly pseudoconvex $C^\infty$-smooth domains and denote by $d_{K}$ the   distance function corresponding  to a Finsler structure $K$
satisfying     \eqref{bb-1.4}, and by $d_{CC}$ the Carnot-Carath\'eodory distance on the boundaries induced by the Levi form. If   $f:(D_1, d_{K}) \to (D_2,d_{K})$  is an isometry then the induced boundary map $F: (\p D_1, d_{CC})\to (\p D_2, d_{CC})$ is a diffeomorphism,  conformal with respect to the metric $d_{CC}$.
 \end{theorem}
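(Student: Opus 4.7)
The overall approach follows the Mostow-rigidity template: the isometry $f$ yields a quasi-symmetric, hence quasi-conformal, boundary extension $F$, and the task is to upgrade quasi-conformality to $1$-quasi-conformality with respect to $d_{CC}$; smoothness then follows from \cite{CLDO}. First, since each $(D_i, d_K)$ is Gromov hyperbolic with visual boundary identified with $\p D_i$, and since $d_{CC}$ lies in the canonical snowflake class of visual metrics in the Balogh-Bonk sense, the isometry $f$ extends to a homeomorphism $F: \p D_1 \to \p D_2$ that is quasi-symmetric, and hence quasi-conformal, with respect to $d_{CC}$. This reduces the theorem to showing that the metric distortion of $F$ at every boundary point equals exactly $1$.

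The core of the argument is a sharpening of the Balogh-Bonk comparison between $d_K$ and the Bonk-Schramm hyperbolic filling metric $g$ of $(\p D_i, d_{CC})$ given in \eqref{filling-metric-cc}. The plan is to establish a precise asymptotic identity of the form
\[
d_K(x,y) = g(x,y) + o(1)
\]
as $x,y$ approach the boundary along controlled configurations, together with an explicit description of $g$ in terms of $d_{CC}$ and the height above $\p D_i$. Applying the isometric equation $d_K(f(x_n), f(y_n)) = d_K(x_n, y_n)$ to pairs $x_n, y_n$ tending to distinct boundary points at comparable heights and passing to the limit yields an asymptotic identity for $g$ pulled back by the boundary extension. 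Because $g$ encodes $d_{CC}$ as its leading-order boundary behavior, this identity translates into the statement that the infinitesimal distortion of $F$ at every point of $\p D_1$ equals $1$, i.e.\ that $F$ is $1$-quasi-conformal with respect to $d_{CC}$.

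With $1$-quasi-conformality in hand, the results of \cite{CLDO} apply directly to conclude that $F$ is a $C^\infty$ contact diffeomorphism, conformal for the sub-Riemannian structure, as the statement demands. The main obstacle is the sharpened comparison: Balogh-Bonk's estimate is only accurate up to an additive constant, which is enough for quasi-isometric invariance but loses the conformal factor. Refining it requires an expansion of $d_K$ near the boundary precise enough to isolate the contribution of the Levi form, uniformly across the three admissible choices of $K$ (Bergman, Kobayashi, Carath\'eodory). Once this refinement is established, passing to the limit in the isometry equation is essentially formal; extracting that refinement in a way that respects the hypothesis \eqref{bb-1.4} is where the bulk of the technical effort must lie.
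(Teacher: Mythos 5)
Your proposal matches the paper's strategy: refine the Balogh--Bonk comparison so that the additive gap between $d_K$ and the hyperbolic filling metric $g$ is arbitrarily small near the boundary (the paper's Theorem~\ref{bb-better}), note that $g$ determines $d_{CC}$ conformally via the Bourdon visual quasi-distance $\rho^g_o$ (Proposition~\ref{p1}), transport the isometry to the boundary, and close with the rigidity of $1$-quasi-conformal maps from \cite{CLDO}. The one step you call ``essentially formal''---passing from the refined comparison to pointwise distortion $1$ for $F$---carries real content in the paper: because the $o(1)$ error is not uniform, the base point $o$ of the visual quasi-distances $\rho^g_o$ and $\rho^K_o$ must be chosen in a way that depends on the boundary point $\bar p$ being tested (near a boundary point $\omega$ close to but distinct from $\bar p$, as in Proposition~\ref{p2}), and the passage to the boundary is organized through Gromov products and a chain rule for pointwise distortion rather than a direct limit of the isometry identity.
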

 We emphasize that the result holds when  $d_K$ is 
 the Bergman, the Kobayashi, or the Carath\'eodory metrics. Indeed, these distances satisfy 
 \eqref{bb-1.4} in view of the  work in \cite{Balogh-Bonk,MR1679982,Ma}.

As we noted above, the proof of Theorem~\ref{main} is based on the study of the relation between the visual distances associated to $d_K$ and the visual distance of an ad-hoc hyperbolic filing metric, built through the Carnot-Carath\'eodory distance:
For $x\in D$ denote 
 by $h(x):=\sqrt {d_E(x,\partial D)}$
and
by $\pi(x) \in \partial D$ a  closest point in $\partial D$ with respect to the Euclidean distance $d_E(\cdot, \cdot)$, noting it is uniquely defined in a neighborhood of $\partial D$. 
  Set 
\begin{equation}\label{filling-metric-cc}
g(x,y):=2\log \bigg(   \frac{d_{\rm CC}(\pi(x), \pi(y))+ \max(h(x),h(y) ) }{\sqrt{h(x) h(y)} } \bigg).
\end{equation}
 This is an hyperbolic filling  metric built from the metric space $(\p D, d_{\rm CC})$ (see  Bonk and Schramm \cite{MR1771428}). Balogh and Bonk \cite[Corollary 1.3]{Balogh-Bonk}, showed  that $g$ is a metric in a neighborhood of $\p D$ and that $g$ and the invariant distance function $d_K$ are $(1,C)$-quasi-isometric.  As a consequence, they give rise to quasi-conformally equivalent visual metrics.
 
 The main technical point of our work is to refine this result in a quantitative fashion. We show that a particular visual quasi-distance $\rho^K_o$  associated to the invariant metric $d_K$
  is in fact  {\it pointwise and asymptotically} $(1+\e)$-quasi-conformally equivalent to the Carnot-Carath\'eodory $d_{CC}$ metric.
By {\it pointwise and asymptotically} we mean that  for every point $x\in \p D$ in the boundary, and for every $\e>0$, one can choose a base point $o$ for the definition of the visual distances so that the identity map has distortion less than $1+\e$ at $x$.
 Following ideas in CAT$(-1)$ spaces, given a pointed metric space $(X,d,o)$ we consider the {\it Bourdon
 function}
 \begin{equation}\label{def:Bourdon:dist}
 \rho^{d}_o ( x,y) = \exp (- \langle x, y \rangle_o) ,
 \end{equation}
 where $\langle x, y \rangle_o$ denotes the Gromov product in $(X,d)$, see Section~\ref{prelim}. 
 Usually, $ \rho^{d}_o $ is called Bourdon distance since for CAT($-1$) spaces it satisfies the triangle inequality. In our setting, 
$ \rho^{d}_o $ may not be a distance.

Moreover, on a CAT($-1$) space $X$   Bourdon showed in \cite{Bourdon-95}  that 
the visual boundaries $(\p_{\infty} X, \rho^d_o)$ corresponding to diffent base points  $o,o'\in X$ are conformally equivalent, thus implying immediately that any isometry of $ X $ extends to a conformal maps of its visual boundaries. 
Since pseudoconvex domains 
may not have negative curvature (see \cite{MR3114665}) and may not  be simply connected,
 they are not  CAT$(-1)$  spaces and so one cannot apply Bourdon's result.


Theorem~\ref{main} is achieved in two steps: First one shows that the Carnot-Carath\'eodory distance is conformally equivalent\footnote{ The result holds for any hyperbolic filling  as in the work of  Bonk and Schramm. See Section \ref{bbs}}
 to the Bourdon function $\rho^g_o$ associated to the hyperbolic filling metric $g$.
 
 \begin{proposition}\label{p1} For any $o\in D$, the functions $d_{CC}$ and $\rho^g_o$ are conformally equivalent.  \end{proposition}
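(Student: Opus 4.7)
The plan is to compute the Bourdon function $\rho^g_o$ explicitly on $\partial D \times \partial D$ from the defining formula \eqref{filling-metric-cc}, and to read off conformal equivalence with $d_{CC}$ directly from that closed-form expression. Since $g$ is already written in terms of $d_{CC}$, $h$, and $\pi$, no quasi-isometry arguments with $d_K$ are needed at this stage; the statement is really about the filling metric alone.

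First I would record, for $x,y$ in the neighborhood of $\partial D$ on which $g$ is a metric and $\pi$ is well-defined, the Gromov product
\[
\langle x, y \rangle_o^g = \tfrac{1}{2}\bigl(g(o,x)+g(o,y)-g(x,y)\bigr).
\]
Plugging in \eqref{filling-metric-cc} and simplifying, all the $\sqrt{h(o)h(x)h(y)}$ terms cancel nicely, giving
\[
\langle x,y\rangle_o^g = \log\!\left(\frac{[d_{CC}(\pi(o),\pi(x))+\max(h(o),h(x))]\,[d_{CC}(\pi(o),\pi(y))+\max(h(o),h(y))]}{h(o)\,[d_{CC}(\pi(x),\pi(y))+\max(h(x),h(y))]}\right).
\]
Hence on $D\times D$ (near $\partial D$)
\[
\rho^g_o(x,y)=\frac{h(o)\,[d_{CC}(\pi(x),\pi(y))+\max(h(x),h(y))]}{[d_{CC}(\pi(o),\pi(x))+\max(h(o),h(x))]\,[d_{CC}(\pi(o),\pi(y))+\max(h(o),h(y))]}.
\]

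Next I would let $x\to\xi\in\partial D$ and $y\to\eta\in\partial D$. In this limit $h(x),h(y)\to 0$ and $\pi(x)\to\xi$, $\pi(y)\to\eta$, so the formula extends continuously to $\overline{D}\times\overline{D}$ minus the diagonal of the interior, and on $\partial D\times\partial D$ it reads
\[
\rho^g_o(\xi,\eta)=\frac{h(o)\,d_{CC}(\xi,\eta)}{[d_{CC}(\pi(o),\xi)+h(o)]\,[d_{CC}(\pi(o),\eta)+h(o)]}.
\]
Setting $\lambda(\xi):=\bigl(d_{CC}(\pi(o),\xi)+h(o)\bigr)^{-1}$, this is
\[
\rho^g_o(\xi,\eta)= h(o)\,\lambda(\xi)\,\lambda(\eta)\,d_{CC}(\xi,\eta),
\]
a factorization of the Bourdon type familiar from CAT$(-1)$ spaces.

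From this explicit ratio conformal equivalence is immediate: the function $\lambda$ is continuous and strictly positive on $\partial D$, hence for every $\xi\in\partial D$
\[
\lim_{\eta\to\xi}\frac{\rho^g_o(\xi,\eta)}{d_{CC}(\xi,\eta)}= h(o)\,\lambda(\xi)^2 \in (0,\infty),
\]
and this limit depends continuously on $\xi$. That is exactly pointwise conformal equivalence of $\rho^g_o$ and $d_{CC}$ on $\partial D$.

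The only real obstacle I foresee is technical: one must check that the Gromov-product limit above is genuinely independent of the sequences $x_n\to\xi$, $y_n\to\eta$, and that $o$ can be taken inside the neighborhood where $g$ is known (by Balogh--Bonk, Corollary 1.3 of \cite{Balogh-Bonk}) to be a bona fide metric. The first issue is handled by the explicit formula, which depends only on $\pi(x_n),h(x_n)$ and these converge unambiguously; the second is a matter of restricting to a collar of $\partial D$ and noting that the definition of $\rho^g_o$ and the statement of the proposition are invariant under such a choice (changing the base point only rescales $\lambda$). Once these points are settled, the argument is essentially the algebraic identity displayed above.
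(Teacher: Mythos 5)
Your proposal is correct and takes essentially the same approach as the paper: you compute $\langle x,y\rangle^g_o$ directly from the defining formula \eqref{filling-metric-cc}, simplify, let $x\to\xi$ and $y\to\eta$ to obtain $\rho^g_o(\xi,\eta)=h(o)\,d_{CC}(\xi,\eta)/\big[(d_{CC}(\pi(o),\xi)+h(o))(d_{CC}(\pi(o),\eta)+h(o))\big]$, and then read off that the ratio $\rho^g_o(\xi,\eta)/d_{CC}(\xi,\eta)$ has a positive limit as $\eta\to\xi$. The factorization $\rho^g_o=h(o)\lambda(\xi)\lambda(\eta)d_{CC}(\xi,\eta)$ and the explicit note about sequence-independence are mild presentational additions; the underlying algebra and conclusion are identical to the paper's.
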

 
 In other words, the identity map $ (\p D, d_{CC}) \to (\p D,\rho^g_o)$ has distortion  that is identically equal to one. See \eqref{hstella} for the definition of distortion.

Next, we show that at   every boundary point, and for any $\e>0$, one can find a base point $o\in D$ such that the corresponding visual functions $\rho^K_o$ and $\rho^g_o$ are $(1+\e)$-biLipschitz equivalent in a neighborhood of that point. In the following we denote Euclidean balls in $\C^n$ with the notation $B(x,r)$.


   \begin{proposition}\label{p2} 
   For any $\bar p\in  \p D$ and $\bar \e>0$ there exists 
$r>0$   
   such that for all
    $\omega \in \p D\cap B(\bar p,r)\setminus \{\bar p\}$ 
   there exists $r'>0$ such that for all 
   $o\in D\cap B(\omega,r')$
  the two functions $\rho^g_o$ and $ \rho^K_o$ are $(1+\bar \e)$-biLipschitz on $\p D \cap B(\bar p,r')$. 
 \end{proposition}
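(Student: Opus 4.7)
\textbf{Proof plan for Proposition~\ref{p2}.} Since $\rho^d_o(\xi,\eta)=\exp(-\langle\xi,\eta\rangle^d_o)$, the desired $(1+\bar\e)$-biLipschitz comparison of $\rho^g_o$ and $\rho^K_o$ is equivalent to the additive bound
\begin{equation*}
\bigl|\langle\xi,\eta\rangle^K_o - \langle\xi,\eta\rangle^g_o\bigr| \leq \log(1+\bar\e) \qquad \text{for all } \xi,\eta\in\partial D\cap B(\bar p,r').
\end{equation*}
Writing the Gromov products with approximating sequences $x_n\to\xi$, $y_n\to\eta$ in $D$, this difference becomes
\begin{equation*}
\tfrac{1}{2}\lim_n\bigl[(d_K(o,x_n)-g(o,x_n)) + (d_K(o,y_n)-g(o,y_n)) - (d_K(x_n,y_n)-g(x_n,y_n))\bigr],
\end{equation*}
so it suffices to control each of the three defects $d_K-g$ by $\tfrac{2}{3}\log(1+\bar\e)$ in the relevant regime.

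The main technical input is a quantitative refinement of the Balogh--Bonk comparison. Their results yield only a uniform bound $|d_K(u,v)-g(u,v)|\leq C$, but the explicit form of $g$ in \eqref{filling-metric-cc} together with the asymptotic hypothesis \eqref{bb-1.4} on $d_K$ should produce a modulus $\Psi=\Psi(h(u),h(v))$, vanishing as $h(u),h(v)\to 0$, such that
\begin{equation*}
\bigl|d_K(u,v)-g(u,v)\bigr|\leq \Psi(h(u),h(v))
\end{equation*}
whenever $d_{CC}(\pi(u),\pi(v))$ is bounded below. Extracting such a $\Psi$ from the Balogh--Bonk proof by tracking the coincidence of the leading-order expansions of $d_K$ and $g$, rather than only their bounded difference, is the heart of the argument.

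Given such a modulus, the parameters are chosen as follows. First pick $r$ small enough that for every $\omega\in\partial D\cap B(\bar p,r)\setminus\{\bar p\}$ one has $d_{CC}(\omega,\bar p)>0$ and both $\pi$ and $d_{CC}$ behave continuously near $\omega$ and $\bar p$. Then, given such $\omega$, shrink $r'>0$ so that: (i)~$B(\omega,r')$ and $B(\bar p,r')$ are disjoint, forcing $d_{CC}(\pi(o),\xi)$ bounded below whenever $o\in B(\omega,r')\cap D$ and $\xi\in\partial D\cap B(\bar p,r')$; (ii)~$o\in B(\omega,r')$ forces $h(o)\leq\sqrt{r'}$ arbitrarily small; (iii)~$\Psi$ evaluated on the resulting height parameters is below $\tfrac{2}{3}\log(1+\bar\e)$. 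The first two defects then meet the required bound as $h(x_n),h(y_n)\to 0$. For the third defect $d_K(x_n,y_n)-g(x_n,y_n)$, one may assume $\xi\neq\eta$ (otherwise both Bourdon functions vanish and the claim is trivial), so that $d_{CC}(\pi(x_n),\pi(y_n))\to d_{CC}(\xi,\eta)>0$ keeps the pair $(x_n,y_n)$ in the regime where $\Psi$ applies.

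The principal obstacle is the quantitative refinement in the second paragraph: passing from the qualitative $(1,C)$-quasi-isometry of \cite{Balogh-Bonk} to an explicit modulus $\Psi$ that tends to zero as the endpoints approach $\partial D$. Once this is in place, the separation $\omega\neq\bar p$ and the choice $h(o)\to 0$ force every pair entering the Gromov product to sit simultaneously in the small-height, well-separated-boundary-projection regime, and the refined estimate propagates a near-isometric comparison from the interior metrics $d_K,g$ to the boundary Bourdon functions $\rho^K_o,\rho^g_o$.
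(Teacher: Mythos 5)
Your plan is correct and matches the paper's strategy: the paper also reduces the $(1+\bar\e)$-biLipschitz claim to an additive bound on the difference of Gromov products, apportions the tolerance across the three defects $d_K-g$, and chooses the two radii $r$, $r'$ precisely so that the base point $o$ sits near $\omega$ (separated from $\bar p$, hence bounded-below $d_{CC}$ to the relevant projections) while all heights are forced small. The quantitative refinement you correctly flag as the heart of the matter is exactly what the paper proves separately as Theorem~\ref{bb-better}, which is then invoked twice (once for the pairs $(o,x_n)$, $(o,y_n)$ and once for $(x_n,y_n)$) in the same way you outline.
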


 The proof of Proposition~\ref{p2} and  Theorem~\ref{main} are in Section~\ref{loclip}. Theorem~\ref{main}  follows rather directly from Propositions~\ref{p1} and \ref{p2} and from the following diagram
 
  \begin{displaymath}\label{diagram}
\xymatrix{
	&	(D_1, d_{\rm K})  \ar@{~>}[d] 
\ar[r]^ {f}_{\rm {iso}}& (D_2, d_{\rm K})   \ar@{~>}[d] 
	&	\\
(\p D_1, \rho^{\rm g}_o) 
 \ar[r]^{\rm id}_{(1+\eps)\rm BL} 
&(\p D_1, \rho^{ K}_o)
	\ar[r]^F_{\rm iso}&	
	(\p D_2, \rho^{K}_{f(o)})
 \ar[r]^{\rm id}_{(1+\eps)\rm BL}  
&  
(\p D_2, \rho^{ g}_{f(o)})   
\ar[d]^{\rm id}_{ \rm conf}   	\\
    (\p D_1, d_{\rm CC} )	
  \ar[u]^{\rm id}_{  \rm  conf}
  \ar@{-->}[rrr]^F
&&	 
    & (\p D_2, d_{CC} ) 
      }\tag{D}
 \end{displaymath}

  At the center of this chain of compositions there is an isometry, the rest of the links are either $(1+\e)$ biLipschitz maps or conformal maps, so that the total distortion is at most $\e$ away from being  equal to 1 everywhere.

  From the  conformal equivalence theorem above and the results in \cite{CLDO}, one can immediately  infer a result about boundary extensions for biholomorphisms between strictly pseudoconvex domains in $\C^n$, originally established by Fefferman \cite{MR0350069}.
  
  \begin{corollary}
 Let $D_1,D_2\in \mathbb C^n (n\geq2) $  be
strongly pseudo-convex domains with $C^\infty$-smooth boundaries.  If $f:D_1\to D_2$ is a biholomorphism then it extends to a smooth map $F: \partial D_1 \to \partial D_2$ that is conformal with respect to the corresponding subRiemannian contact structure. In particular, at every boundary point, its differential is a similarity between the maximally complex tangent planes.
  \end{corollary}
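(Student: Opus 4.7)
The plan is to derive the corollary directly from Theorem~\ref{main}. The crucial input is that any biholomorphism $f\colon D_1 \to D_2$ is automatically an isometry of the Kobayashi (and Carath\'eodory, and Bergman) distance. Indeed, the Kobayashi pseudodistance is defined purely through holomorphic maps from the disk, so for any biholomorphism $\phi$ one has $d_K(\phi(x),\phi(y)) = d_K(x,y)$. Since strongly pseudoconvex domains with smooth boundary are Kobayashi hyperbolic, $d_K$ is genuinely a distance and $f$ becomes a bona fide isometry of the metric spaces $(D_1,d_K)$ and $(D_2,d_K)$. Moreover, $d_K$ satisfies hypothesis \eqref{bb-1.4} by the results cited after Theorem~\ref{main}, so that theorem applies verbatim.

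Applying Theorem~\ref{main} then produces a boundary map $F\colon (\p D_1, d_{CC}) \to (\p D_2, d_{CC})$ that is a smooth diffeomorphism and conformal with respect to the Carnot--Carath\'eodory distance. The existence of a homeomorphic boundary extension is a consequence of the Gromov hyperbolicity of $(D_i, d_K)$ established by Balogh and Bonk: an isometry of Gromov hyperbolic spaces extends canonically to the Gromov boundary, which in our setting coincides with $\p D_i$. The CC-conformality and the smoothness are precisely the content of Theorem~\ref{main}, the latter having absorbed the regularity result from \cite{CLDO} that upgrades $1$-quasiconformal maps of smooth sub-Riemannian contact structures to smooth diffeomorphisms.

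To phrase the CC-conformality of $F$ as the stated similarity between maximally complex tangent planes, one uses that the sub-Riemannian contact structure on $\p D_i$ has horizontal distribution $H\p D_i = T\p D_i \cap J(T\p D_i)$, which is exactly the maximally complex tangent subbundle, and that the CC metric is the length metric generated by the Levi form on this subbundle. A smooth diffeomorphism that is conformal for the CC distance is necessarily a contact transformation whose restriction to $H\p D_i$ is fiberwise a similarity of the Levi inner product; this is a standard consequence of the theory of conformal maps between sub-Riemannian contact structures. This translates the theorem's conclusion into the pointwise similarity claim of the corollary.

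In short, the argument is essentially a translation: the main (and only) nontrivial point is identifying biholomorphisms with metric isometries of $(D_i, d_K)$, after which Theorem~\ref{main} does all the work and the similarity statement follows by unwinding the definition of the CC structure on $\p D_i$. I do not expect any substantive obstacle beyond invoking the standard biholomorphic invariance of the Kobayashi distance.
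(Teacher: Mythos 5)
Your proposal is correct and matches the paper's (implicit) argument: the paper states the corollary follows "immediately" from Theorem~\ref{main} and the results of \cite{CLDO}, and the only nontrivial step is precisely the one you identify, namely that a biholomorphism is an isometry of the invariant (Kobayashi/Bergman/Carath\'eodory) distance, so Theorem~\ref{main} applies. Your unwinding of the conclusion into the similarity statement via Lemma~\ref{lemma:CLO} is also exactly what the paper intends.
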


Since the publication of  \cite{MR0350069} there have been several significative extensions and simplifications of the result. A small sample of this extensive line of inquiry can be found in the references \cite{bell, bell2, nirenberg, barrett, krantz}.

Rather than a simplification of Fefferman's original proof, our approach  is  a recasting of the result from the perspective of analysis in metric spaces and the circle of ideas at the core of Mostow rigidity \cite{Mostow}. The differentiable structure is not used to show that the extension map is $1$-quasi-conformal, and then it only enters in play coupled with the rigidity of $1$-quasi-conformal mappings in higher dimension. Likewise, curvature enters into the arguments only in its synthetic  (metric) form.  In particular, our work can be seen as an instance of a dictionary,  introduced by Bonk, Heinonen, and Koskela in \cite{BHK}, translating  back and forth problems in domains in Euclidean spaces by means of ad hoc hyperbolic or quasi-hyperbolic metrics, that endow such domains with an hyperbolic   structure in the sense of Gromov. For more results along this line, see also the recent, interesting work of Zimmer in \cite{zimmer}.
\bigskip

\noindent{\it Acknowledgements}  The recasting of  Fefferman's result  from the point of view of  Mostow rigidity and metric hyperbolicity was the main motivation behind this work, and was outlined  by Michael Cowling, back in 2007. The authors are very grateful to both Michael Cowling and to Loredana Lanzani for several key observations that have led to a better understanding of the problem.

\section{Preliminaries}\label{prelim}
 
In this section we recall some basic definitions and results.
We start by discussing distortion and  conformal maps on subRiemannian manifolds.
Then we discuss pseudoconvex domains and their metrics. Finally we review hyperbolicity in the sense of Gromov.

\subsection{Distorsion in subRiemannian geometry}

By a previous work of the authors together with Ottazzi, we know that several definitions of conformal maps are equivalent in the setting of contact subRiemannian manifolds. We now recall the two definitions that we shall need in this paper.

For a homeomorphism $F:X\to Y$ between  general metric spaces, we
consider the 
  following  quantities 
$$
\Lip_F(x)  := \limsup_{x'\to x} \dfrac{d(F(x), F(x'))}{d(x,x')} \qquad\text{ and }\qquad
\ell_F(x):= \liminf_{x'\to x} \dfrac{d(F(x), F(x'))}{d(x,x')}.$$
The quantity $\Lip_F(x)$ is sometimes denoted by ${\rm Lip}_{ F}(x) $ and is called the pointwise Lipschitz constant. 
Within this paper, we define the {\em distortion} of $f$ at a point $x\in X$ as
\begin{equation}\label{hstella}  H^*(x,F, d_{X}, d_{Y}) := \dfrac{\Lip_F(x) }{\ell_F(x)}.\end{equation}
The homeomorphism $f$ is said to be {\em quasi-conformal} if there exists $K$ such that for all $x\in X$ one has
$$ 
 \limsup_{r \to 0}\frac{
\sup \{d_Y(F(p),F(q)):  {d_X(p,q)\leq r}   \}}
{
\inf \{d_Y(F(p),F(q)) : {d_X(p,q)\geq r}\}}  \leq K   .$$
It is well-known that in the literature there are several other equivalent definitions of quasi-conformality in `geometrically nice' spaces, see \cite{Williams}. 
 However, the equivalence is not quantitative, in the sense that each definition has an associated  constant (like the $K$ above) and the value of of these constants can be different from definition to definition. Thus we need to clarify what is a conformal map.
To do this we invoke 
Theorem 1.3  and Theorem 1.19 from \cite{CLDO}.
Namely, the additional subRiemannian structure allows to an unambiguous definition of 1-quasiconformality.


\begin{lemma}[C-L-O] \label{lemma:CLO}
Let $F:X\to Y$ be a quasi-conformal homeomorphism   between two equiregular subRiemannian manifolds.

(i)  The requirement $  H^*(\cdot,F, d_{X}, d_{Y})\equiv 1$ is equivalent to other notions of 1-quasi-conformality.

(ii) If  $X$ and $Y$ are contact manifolds, then 1-quasi-conformality of $F$ is equivalent to $F$ being conformal (i.e., smooth and with horizontal differential that is a homothety).
\end{lemma}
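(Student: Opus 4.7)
The plan is to reduce both parts to the infinitesimal picture via Pansu-type differentiability of quasi-conformal maps. An equiregular subRiemannian manifold is locally Ahlfors $Q$-regular (with $Q$ the homogeneous dimension of the tangent Carnot group) and supports a $(1,Q)$-Poincaré inequality, so it fits the Heinonen-Koskela framework and the machinery in \cite{Williams} applies, yielding the qualitative equivalence of the metric, geometric, and analytic notions of quasi-conformality. For part (i), I would then establish the key identity
\[
H^*(x,F) \;=\; \frac{\max\{\|D_HF(x)v\| : \|v\|=1\}}{\min\{\|D_HF(x)v\| : \|v\|=1\}}
\]
at each Pansu differentiability point, where $D_HF$ denotes the Pansu differential and $\|\cdot\|$ is the horizontal norm. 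With this in hand, $H^*\equiv 1$ is equivalent to $D_HF(x)$ being a horizontal similarity a.e.; running the Heinonen-Koskela-Williams machinery in the opposite direction converts this into $Q$-modulus preservation of curve families (the geometric definition of $1$-QC) and into the usual a.e.~analytic condition, so that all standard formulations of $1$-quasi-conformality coincide.

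For part (ii), the plan is to add the contact hypothesis. At every point of a contact subRiemannian manifold the tangent cone is a Heisenberg-type Carnot group, whose group of horizontal similarities is finite-dimensional. I would invoke a Liouville-type rigidity theorem, valid on these Carnot tangent cones, asserting that any $1$-quasi-conformal map whose Pansu differential is a horizontal similarity almost everywhere must coincide locally with a smooth element of the contact conformal pseudogroup. In particular $F$ is smooth, and its horizontal differential is a pointwise homothety, which is the desired conformality.

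The main obstacle is precisely this last upgrade, from an \emph{almost everywhere} similarity statement to an \emph{everywhere smooth} conformality statement: pointwise distortion is not automatically continuous, so one cannot simply pass to limits, and one must exploit the rigidity of horizontal similarity transformations of Heisenberg-type Carnot groups together with the Tanaka-prolongation theory of contact structures. Equiregularity is used throughout to guarantee that these tangent cones are well-defined Carnot groups at every point, so that the Pansu differentiation theory and the group-theoretic rigidity apply uniformly.
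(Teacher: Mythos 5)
The paper offers no proof of this lemma: it is a restatement of Theorem~1.3 and Theorem~1.19 of the cited reference \cite{CLDO} (Capogna--Le Donne--Ottazzi), and the authors simply invoke those theorems. So there is no internal argument against which to compare your proposal.

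That caveat aside, your sketch is a reasonable reconstruction of the strategy actually used in \cite{CLDO}: Pansu-type differentiability of quasi-conformal maps, expression of the pointwise distortion $H^*$ as the ratio of operator norms of the Pansu differential at differentiability points, reduction to an almost-everywhere horizontal-similarity condition, and, for part~(ii), a Liouville-type rigidity argument on the Carnot tangent cones combined with Tanaka prolongation theory for contact structures. You also correctly identify the real difficulty --- promoting an almost-everywhere similarity statement to everywhere smoothness and conformality --- which is indeed the technical core of the cited theorems. Two small notes. First, for contact subRiemannian manifolds the tangent cone at every point is a subRiemannian Heisenberg group, not a general Heisenberg-\emph{type} group in the sense of Kaplan; the distinction matters when invoking Liouville-type rigidity. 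Second, your sketch leaves several substantial ingredients as black boxes (Pansu differentiability for QC maps between equiregular subRiemannian manifolds rather than Carnot groups, the Poincar\'e inequality in this generality, the asserted identity for $H^*$ in terms of $D_HF$, and the exact form of Liouville rigidity required); an actual proof of the lemma would need to supply each of these, but for the purposes of this paper the black-box citation to \cite{CLDO} is the intended treatment.
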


One of the advantages to work with \eqref{hstella} is that it immediately  yields a  chain rule:
\begin{equation}\label{chain}	
H^*(x,F_1\circ F_2) \leq H^*(x, F_2 )  H^*(F_2 (x),F_1 ).  
\end{equation}
The last equation  follows from the fact that  $ \limsup a_nb_n \leq \limsup a_n \limsup b_n$ whenever $a_n, b_n\geq0$.
 Moreover, we trivially have that if $f$ is an $L$-biLipschitz homeomorphism, then
\begin{equation}\label{biLip}	H^*(x,F ) \leq L.
\end{equation}

\subsection{Pseudoconvex domains and hermitian metrics} 
\label{sec:Pseudoconvex}
We recall some of the basic definitions about pseudoconvex domains and hermitian metrics, as well as some key results proved by Balogh and Bonk in \cite{Balogh-Bonk}.

Let  $D\subset \C^n$, $n\ge 2$ be a smooth, bounded open set.
Let $\defun: \C^n \to \R$ denote the  signed distance function from $\p D$, negative in $D$ and positive in its complement.  
Set $N_\delta=\{x\in D\ |\  d_E(x, \partial D)<\delta\}$.

\begin{lemma}[Tubular Neighborhood Theorem] \label{bb-background}
Let $D\subset \C^n$, $n\ge 2$ be a bounded domain with  smooth boundary.  There exists $\delta_0>0$ such that the projection $\pi:N_{\delta_o}\to \p D$ is a smooth, well defined map and the distance function $d_E(\cdot,\p D)$ is smooth on $N_{\delta_0}$.
\end{lemma}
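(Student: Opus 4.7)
The plan is to apply the classical tubular neighborhood construction for smooth embedded hypersurfaces, specialized to the real hypersurface $\partial D \subset \R^{2n} \cong \C^n$. The key tool is the normal exponential map built from the inward unit normal.

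First, since $\partial D$ is a $C^\infty$ compact embedded real hypersurface of $\R^{2n}$, it carries a smooth inward unit normal vector field $\nu:\partial D\to \R^{2n}$. Define the smooth map
\[
E:\partial D\times \R \to \R^{2n},\qquad E(p,t)=p+t\nu(p).
\]
At any point $(p,0)$, the differential $dE_{(p,0)}$ restricts to the identity on $T_p\partial D$ in the first factor and sends $\partial_t$ to $\nu(p)$. Since $T_p\partial D\oplus \R\nu(p)=\R^{2n}$, the map $dE_{(p,0)}$ is a linear isomorphism, so by the inverse function theorem $E$ is a local $C^\infty$ diffeomorphism near every point of $\partial D\times\{0\}$. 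Compactness of $\partial D$ then upgrades this to a uniform statement: there is $\delta_0>0$ such that $E$ restricts to a $C^\infty$ diffeomorphism from $\partial D\times(-\delta_0,\delta_0)$ onto an open neighborhood $U$ of $\partial D$ in $\R^{2n}$, and after shrinking $\delta_0$ we may assume $N_{\delta_0}\subset U\cap D$.

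Next I would identify $\pi$ and the Euclidean distance function via $E^{-1}$. For $x\in N_{\delta_0}$ write $E^{-1}(x)=(p(x),t(x))$; both coordinates are smooth in $x$ because $E^{-1}$ is a diffeomorphism. I claim $\pi(x)=p(x)$ and $d_E(x,\partial D)=|t(x)|=-t(x)$ (since $x\in D$ and $\nu$ is inward, $t(x)<0$). For this one must show that the nearest point to $x\in N_{\delta_0}$ on $\partial D$ is unique and equals $p(x)$. The standard argument: any smooth curve $\gamma(s)$ in $\partial D$ minimizing $|x-\gamma(s)|$ must satisfy $x-\gamma(s_0)\perp T_{\gamma(s_0)}\partial D$, hence $x-\gamma(s_0)$ is a multiple of $\nu(\gamma(s_0))$, forcing $(\gamma(s_0),-|x-\gamma(s_0)|)$ to lie in $E^{-1}(x)$. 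By injectivity of $E$ on $\partial D\times(-\delta_0,\delta_0)$ and the fact that any candidate minimizer lies at distance $<\delta_0$ (compare with $p(x)$), the minimizer is unique and equals $p(x)$.

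The main (mild) obstacle is precisely this last point: ensuring the minimum distance is attained inside the range of $E$, so that competitors outside the tubular neighborhood are ruled out. This is handled by choosing $\delta_0$ smaller than half the injectivity radius of $E$, so that for $x\in N_{\delta_0}$ the point $p(x)\in\partial D$ is already within distance $<\delta_0$, while any point $q\in\partial D$ with $E^{-1}$-coordinate $|t|\geq\delta_0$ automatically satisfies $|x-q|\geq\delta_0>d_E(x,\partial D)$. Once this is in place, smoothness of $\pi(x)=p(x)$ and of $d_E(x,\partial D)=-t(x)$ on $N_{\delta_0}$ follows immediately from smoothness of $E^{-1}$, completing the proof.
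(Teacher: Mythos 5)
The paper simply states this lemma as a classical fact (the smooth tubular neighborhood theorem) without giving a proof, so there is no in-paper argument to compare against; your proof is the standard normal-exponential-map construction and is essentially correct. The one point I would tighten is the compactness step: going from ``$E$ is a local diffeomorphism near every point of $\partial D\times\{0\}$'' to ``$E$ is injective, hence a diffeomorphism, on all of $\partial D\times(-\delta_0,\delta_0)$ for a single $\delta_0$'' is not automatic from the inverse function theorem alone; you should run the usual contradiction argument (take $(p_n,t_n)\ne(q_n,s_n)$ with $t_n,s_n\to 0$ and $E(p_n,t_n)=E(q_n,s_n)$, pass to subsequences, conclude $p_n,q_n$ converge to a common limit $p$, and contradict local injectivity near $(p,0)$). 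Also, there is a sign slip: with $\nu$ the \emph{inward} normal and $x\in D$, the coordinate $t(x)$ is positive, so $d_E(x,\partial D)=t(x)$, not $-t(x)$; this is purely notational and does not affect the argument, but as written the parenthetical ``$t(x)<0$'' is inconsistent with your own convention. With those two repairs the proof is complete.
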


We will denote by $n(x)$  the outer unit normal at $x\in \p D$, so that the fiber $\pi^{-1}(x)\cap N_{\delta_0}=\{ x+sn(x) | s\in (0,\delta_0)\}.$

For $p\in \p D$,  one can define the tangent space $T_p\p D=\{ Z\in \C^n | Re\langle \bar \p \defun(p), Z\rangle=0  \}$ and its maximal complex subspace $H_p\p D=\{  Z\in \C^n|  \langle \bar \p \defun(p),Z\rangle=0\}$, where $\langle Z,Z'\rangle=\sum_{i=1}^n Z_i \bar Z'_i$ is the  hermitian product. 
  By definition, 
the domain $D$ is {\em strictly pseudoconvex} if for every $p\in \p D$, the Levi form
\begin{equation}\label{levi}
L_\defun(p,Z)  := \sum_{\al,\beta=1}^n \p^2_{z_{\al}\bar z_{\beta}} \defun(p) Z_\al \bar Z_{\beta}
\end{equation}
is positive definite on $H_p\p D$. 

For each $p\in \p D$ one has a splitting $\C^n=H_p\p D\oplus N_p \p D$, where $N_p \p D$ is the complex one-dimensional subspace orthogonal to $H_p \p D$. This splitting at $p$ induces a decomposition $Z=Z_H+Z_N$ for all $Z\in \C^n$.

%
Metrics that are invariant under the action of biholomorphisms play a key role in several complex variables. Important examples are the Bergman metric,    the Kobayashi metric, and the Carath\'eodory metric  (see \cite{MR3114665}). 
In all cases, for $x\in D$ the length of a complex vector $Z\in  T_x D=\C^n$ is given by a Finsler structure $K(x,Z)$.
We will rely on the following result, which can be found in
\cite{MR1679982} and also  \cite[Proposition 1.2]{Balogh-Bonk}.

\begin{proposition}[{{Balogh-Bonk}}]\label{bb-estimate}
Let $D\subset \C^n$, $n\ge 2$ be a bounded, strictly pseudoconvex domain with  smooth boundary and let $K(x,Z)$ 
be the Finsler structure associated to  the  Bergman metric or the Kobayashi metric or the Carath\'eodory metric.
 For every $\bar\e>0$ there exists $\delta_0,C>0$ such that for all $x\in D$ with $d_E(x,\p D)\le \delta_0$ and $Z\in \C^n$ one has 
\begin{multline}\label{bb-1.4}
(1-C \sqrt{d_E(x,\p D) }) \Bigg(  \frac{|Z_N|^2}{4d_E^2(x,\p D)} + (1-\bar\e) \frac{L_\defun(\pi (x), Z_H)}{d_E(x,\p D)} \Bigg)^{\frac{1}{2}} \le K(x,Z) \\ \le
(1 +C  \sqrt{d_E(x,\p D) }) \Bigg( \frac{|Z_N|^2}{4d_E^2(x,\p D)} + (1+\bar\e) \frac{L_\defun(\pi (x), Z_H)}{d_E(x,\p D)}  \Bigg)^{\frac{1}{2}},
\end{multline}
where $Z=Z_H+Z_N$ is the splitting at $\pi(x)$.
\end{proposition}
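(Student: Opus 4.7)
The plan is to reduce the estimate to the explicit case of a Siegel half-space and then transfer the inequalities via an inscribed/circumscribed approximation argument. After translating $\pi(x)$ to the origin and applying a unitary rotation, I would choose coordinates $(z',z_n)=(z_1,\dots,z_{n-1},u+iv)$ so that the outward normal $n(\pi(x))$ points in the $+e_n$ direction and $H_{\pi(x)}\p D=\{z_n=0\}$. A standard polynomial holomorphic change of coordinates (due to Levi/Chern--Moser) then puts $\defun$ into the normal form
\[
\defun(z)=u+L_\defun(\pi(x),z')+R(z),\qquad R(z)=O(|z|^3),
\]
so that $D$ agrees to order three with the Siegel model $\S:=\{u+L_\defun(\pi(x),z')<0\}$.

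The second step is to compute the three invariant Finsler structures on $\S$. The Cayley transform realizes $\S$ as a biholomorphic copy of the unit ball, on which the Kobayashi, Carath\'eodory and Bergman metrics are classical; transporting back, and writing $\rho_\S(x)=-u-L_\defun(\pi(x),z')$ for the distance to $\p\S$, a direct calculation gives
\[
K_\S(x,Z)^2=\frac{|Z_N|^2}{4\rho_\S(x)^2}+\frac{L_\defun(\pi(x),Z_H)}{\rho_\S(x)},
\]
and the same formula (up to a universal constant for Bergman) for Carath\'eodory and Bergman. This is exactly the asserted right-hand side with $\bar\e=0$ and $C=0$.

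The third step is to sandwich $D$ between two Siegel domains. Using $R(z)=O(|z|^3)$, I would produce Siegel models $\S^-\subset D\subset \S^+$, with Levi forms at $\pi(x)$ equal to $(1\mp\bar\e)L_\defun(\pi(x),\cdot)$, in a neighborhood of $\pi(x)$ of size controlled by $\sqrt{d_E(x,\p D)}$. For the \emph{upper bound} on $K$, I would use monotonicity of the Kobayashi metric under inclusion to get $K_D\leq K_{\S^-}$ and push the extremal analytic discs for $\S^-$ into $D$. For the \emph{lower bound}, I would exploit $C_D\leq K_D$ together with H\"ormander's $\bar\p$ construction to extend the Carath\'eodory-extremal peak functions of $\S^+$ to holomorphic functions on $D$ at the cost of a factor $1+C\sqrt{d_E(x,\p D)}$. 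The Bergman case is then handled separately by the Fefferman--Boutet de Monvel--Sj\"ostrand asymptotic of the Bergman kernel, which expresses it to leading order in terms of the kernel of $\S$.

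The main obstacle is the precise tracking of the $1\pm C\sqrt{d_E(x,\p D)}$ factor. The natural Heisenberg scaling at $\pi(x)$ dilates the complex-normal direction by $1/d_E$ and the complex-tangential directions by $1/\sqrt{d_E}$, so the cubic remainder $R(z)$ produces rescaled error terms of size $d_E$ along $Z_N$ and of size $\sqrt{d_E}$ along $Z_H$. Choosing the right size for the sandwich neighborhood, and verifying that a single $(1+C\sqrt{d_E})$ prefactor dominates both contributions simultaneously in \eqref{bb-1.4}, is the delicate technical calculation that drives the whole proof.
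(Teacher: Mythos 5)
The paper does not prove Proposition~\ref{bb-estimate}; it is quoted verbatim from \cite{MR1679982} and \cite[Proposition 1.2]{Balogh-Bonk}, which in turn rest on Graham's boundary estimates for the Kobayashi and Carath\'eodory metrics, on Ma's sharpening of those estimates, and on the Bergman--kernel asymptotics for the Bergman case. Your outline is therefore best judged against that literature, and in broad strokes it is the right route: Heisenberg-adapted coordinates at $\pi(x)$, explicit computation of the three metrics on the Siegel model, a comparison argument to transfer the estimate, and the Fefferman--Boutet de Monvel--Sj\"ostrand expansion for Bergman.

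There is, however, a real gap in the transfer step. The model domains $\S^{\pm}=\{\operatorname{Re} z_n + (1\mp\bar\e)L<0\}$ are unbounded paraboloidal regions, and a strongly pseudoconvex bounded $D$ cannot be sandwiched between them globally: $\S^-\not\subset D$ (the paraboloid escapes any bounded set) and $D\not\subset\S^+$ (far from $\pi(x)$ the boundary of $D$ has nothing to do with the osculating quadric). Consequently the monotonicity inequalities $K_D\le K_{\S^-}$ and $C_D\ge C_{\S^+}$ do not follow from your stated inclusions. To make your upper bound work one must replace $\S^-$ by a \emph{bounded} approximating domain (a complex ellipsoid, tangent to $\p D$ to second order and inscribed in $D$ near $\pi(x)$) whose invariant metric is still computable via a biholomorphism to the ball. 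For the lower bound, your $\bar\p$--extension of the model peak function is the correct idea, but it must be coupled with a localization statement (Royden/Graham): one needs to know that the Kobayashi and Carath\'eodory metrics of $D$ at points near $\pi(x)$ are governed, up to a factor $1+O(\sqrt{d_E})$, by $D\cap U$ for a fixed small neighborhood $U$ of $\pi(x)$; only then can the locally constructed data be compared with the global Finsler structures. These two devices --- bounded approximating ellipsoids and a quantitative localization lemma --- are exactly what the references supply and what your sketch silently assumes. Once they are inserted, the remainder of your outline (tracking the anisotropically rescaled cubic error into a single $1\pm C\sqrt{d_E}$ prefactor) is the right bookkeeping, and the Bergman case is correctly delegated to the kernel asymptotics.
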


The subbundle $H \p D$ is a contact distribution on $\p D$ and the triplet $(\p D, H\p D, L_\defun)$ yields a contact subRiemannian manifold. In this structure, the  {\it horizontal} curves are those arcs in $\p D$ that are tangent to the contact distribution, and the {\it Carnot-Carath\'eodory} distance $d_{CC}(p,q)$ between $p,q\in \p D$ is defined as the minimum time it takes to reach one point from the other traveling  along horizontal curves at unit speed with respect to the Levi form, see \cite{Gromov1}.

As in \cite{Balogh-Bonk}, we will need to use a family of Riemannian metrics on $\p D$ that approximate the sub-Riemannian metric associated to the Levi form, and that in fact  have corresponding distance functions that converge in the sense of Gromov-Hausdorff to the Carnot-Carath\'eodory distance. For every $k>0$ we define a Riemannian metric $g_k$ on $T\p D$ as
\begin{equation}\label{app-def}
g_k^2(p, Z):= L_\defun(p,Z_H)+ k^2 |Z_N|^2,
\end{equation}
for every  $p\in \p D$ and every  $Z=Z_H+Z_N\in T_p  \p D$. 
Here we just recall a basic comparison result (see for instance \cite[Lemma 3.2]{Balogh-Bonk}) relating the distance function $d_k$ associated to $g_k$ to the Carnot-Carath\'eodory distance $d_{CC}$.
\begin{lemma}\label{approx}
There exists a constant $C>0$ such that for all $k>0$, and for all points $p,q\in \p D$, with $d_{CC}(p,q)\ge k^{-1}$ one has 
\begin{equation}
C^{-1} d_k(p,q)\le d_{CC}(p,q) \le C d_k (p,q).
\end{equation}
\end{lemma}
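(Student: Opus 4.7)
Begin with the trivial direction: any horizontal curve $\gamma$ on $\p D$ has $\dot\gamma_N\equiv 0$, so its $g_k$-length $\int\sqrt{L_\defun(\gamma,\dot\gamma_H)+k^2|\dot\gamma_N|^2}\,dt$ reduces to its CC-length $\int\sqrt{L_\defun(\gamma,\dot\gamma_H)}\,dt$. The infimum of $g_k$-lengths is taken over a strictly larger class of curves than the infimum of CC-lengths, so $d_k(p,q)\leq d_{CC}(p,q)$ for all $p,q\in\p D$, which is the left half of the asserted inequality with constant $1$.

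The main work is the reverse inequality. My plan is first to prove the universal (unconditional) quantitative estimate
\begin{equation*}
d_{CC}(p,q) \;\leq\; d_k(p,q) + C_0\sqrt{d_k(p,q)/k},
\end{equation*}
and then to absorb the error term using the hypothesis $d_{CC}(p,q)\geq 1/k$. To establish the estimate, take a near-optimal $g_k$-geodesic $\gamma:[0,T]\to\p D$ from $p$ to $q$ parametrized by $g_k$-arclength, so that $T$ is close to $d_k(p,q)$ and $L_\defun(\gamma,\dot\gamma_H)+k^2|\dot\gamma_N|^2\equiv 1$; this yields $\int_0^T\sqrt{L_\defun(\gamma,\dot\gamma_H)}\,dt\leq T$ and $\int_0^T|\dot\gamma_N|\,dt\leq T/k$. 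Using a finite cover of $\p D$ by charts carrying smooth frames of horizontal vector fields, I construct a \emph{horizontal shadow} $\tilde\gamma$ starting at $p$ whose coefficients against the local horizontal frame coincide with those of $\dot\gamma_H$; then $\tilde\gamma$ is horizontal with CC-length at most $T$, and its endpoint differs from $q$ only by a Reeb displacement of order $T/k$. The Ball--Box theorem for the contact subRiemannian structure $(H\p D,L_\defun)$---uniform over the compact boundary $\p D$ thanks to strict pseudoconvexity---then supplies a horizontal corrector of CC-length at most $C_0\sqrt{T/k}$ joining the endpoint of $\tilde\gamma$ to $q$. Concatenating and letting $T\searrow d_k(p,q)$ gives the estimate.

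To conclude, distinguish two cases. If $d_{CC}(p,q)\leq 2\,d_k(p,q)$, the desired inequality holds with constant $2$. Otherwise $d_{CC}(p,q)/2 < C_0\sqrt{d_k(p,q)/k}$, so $d_{CC}(p,q)^2 < 4C_0^2\,d_k(p,q)/k \leq 4C_0^2\,d_k(p,q)\,d_{CC}(p,q)$ by the hypothesis $1/k\leq d_{CC}(p,q)$, which gives $d_{CC}(p,q)\leq 4C_0^2\,d_k(p,q)$. Taking $C=\max\{2,4C_0^2\}$ completes the proof. The principal obstacle is the Ball--Box step: one must carefully verify that the horizontal shadow differs from $\gamma$ only by a Reeb displacement of the stated order and that this displacement can be covered by a horizontal curve whose CC-length is uniformly of order $\sqrt{T/k}$. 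Compactness of $\p D$, together with uniform positive-definiteness of the Levi form on $H\p D$, supply the required uniform constants.
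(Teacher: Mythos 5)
The paper does not prove this lemma itself; it quotes it as \cite[Lemma 3.2]{Balogh-Bonk}, so there is no internal proof to compare against. Evaluating your argument on its own: the easy direction $d_k\leq d_{CC}$ is exactly right, and your case analysis correctly reduces the hard direction to the estimate $d_{CC}(p,q)\leq d_k(p,q)+C_0\sqrt{d_k(p,q)/k}$, which you then absorb using the hypothesis $d_{CC}(p,q)\geq k^{-1}$. That reduction is clean and the algebra checks out.

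The substance you flag yourself as ``the principal obstacle'' is indeed the only real gap, and two imprecisions there deserve to be made explicit. First, the endpoint of your horizontal shadow $\tilde\gamma$ does \emph{not} differ from $q$ by a pure Reeb displacement: after a Gronwall estimate along the (time-bounded) arc, the drift $\tilde\gamma(T)-\gamma(T)$ in any fixed local chart has both horizontal and vertical components of size $O(T/k)$. The conclusion $d_{CC}(\tilde\gamma(T),q)\lesssim\sqrt{T/k}$ nevertheless holds, because horizontal displacement of size $\epsilon$ costs only $O(\epsilon)$ in CC-length while vertical displacement of size $\epsilon$ costs $O(\sqrt{\epsilon})$, and $T/k\lesssim\sqrt{T/k}$ once $T/k$ is bounded. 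Second, and related, your estimate is not truly ``universal'': the Gronwall factor $e^{CT}$ and the ball--box threshold both require $T$ and $T/k$ to be controlled. You should therefore state the intermediate estimate under the assumption $T/k\leq M$; in the application one has $T\leq(1+\eps)d_k\leq(1+\eps)d_{CC}\leq(1+\eps)\operatorname{diam}_{CC}(\p D)$ and $1/k\leq d_{CC}$, whence $T/k\leq(1+\eps)\operatorname{diam}_{CC}^2=:M$, so this causes no loss. With these two clarifications the argument is complete; the uniformity of the ball--box constants over the compact boundary is standard for a contact distribution with a nondegenerate (here, positive-definite Levi) metric.
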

\subsection{Gromov Hyperbolicity}

Let $x, y, o $ be three points in a  metric space $(X, d)$.
Then the Gromov product of $x$ and $y$ at $o$, denoted $\langle x,y \rangle_o$, is defined by
\begin{equation*}
\langle x,y \rangle_{o} = \frac1{2} \big( d(x, o) + d(y, o) - d(x, y) \big).
\end{equation*}
Then $X$ is called {\em Gromov hyperbolic} if there exists $\delta\geq 0$ such that
$$\langle x,y \rangle_o\geq \min\{ \langle x, z\rangle_o , \langle z, y\rangle_o\} -\delta, \quad \text{ for all }x, y, z,o \in X.$$

For a Gromov hyperbolic space $X$ one can define a boundary set $\partial_\infty X$ as follows, see \cite[p.431-2]{Bridson:book}.
Fix a basepoint $o\in X$. A sequence $(x_i)$ in $X$ is said to {\em converge at infinity} if
$\lim_{i,j\to\infty}\langle x_i, x_j\rangle_o = \infty$. Two sequences $(x_i)$ and $(y_i)$ converging at infinity are
called equivalent if 
$\lim  \langle x_i, y_i\rangle_o =\infty.$ 
These notions do not depend on the
choice of the basepoint $o$.
 The set $\partial_\infty  X$ is now defined as the set of equivalence
classes of sequences converging at infinity.

 For $p,q \in \partial_\infty  X$ and $o\in  X$ we define 
$$\langle p, q\rangle_o = \sup \liminf_{i\to \infty} \langle x_i, y_i\rangle_o  ,$$
where the supremum is taken over all sequences $(x_i)$ and $(y_i)$ representing the
boundary points $p$ and $q$, respectively.
Actually,   there exists such sequences $(x_i)$ and $(y_i)$ for which 
 $\langle p, q\rangle_o =   \lim _{i\to \infty} \langle x_i, y_i\rangle_o $, see \cite[Remark 3.17]{Bridson:book}.

Balogh and Bonk have proved that if $D\subset \C^n$, $n\ge 2$ is a bounded, strictly pseudoconvex domain with  smooth boundary,
and $K(x,Z)$ is a norm satisfying \eqref{bb-1.4}, then the corresponding metric space $(D,d_K)$ is Gromov hyperbolic and its visual boundary coincides with the topological boundary. See \cite[Theorem 1.4]{Balogh-Bonk}.

%
%

\section{Conformal equivalence of  boundary metrics}
\subsection{Proof of Proposition~\ref{p1} }


In this section we prove  Proposition~\ref{p1}, and then show that the conformal equivalence result holds more in general for every hyperbolic filling.

Let $g$ be as defined in \eqref{filling-metric-cc} and let $\rho^g_o$ be its Bourdon distance, as defined by \eqref{def:Bourdon:dist}. 
We begin by giving a computation of the distance $\rho^g_o$   on two points $p,q\in \p D$. We represent $p$ and $ q$ by two sequences $x_i$ and $y_i\in  D$, respectively.
Notice that since $x_i\to p$ in $\C^n$ then $\pi(x_i)\to p$ and $h(x_i)\to 0$. In particular, we also have that
$\max(h(x_i),h(o)) = h(o)$. Similar considerations apply to $y_i$ and $q$.
We compute
 \begin{eqnarray*} 
 \rho^{g}_o ( p,q) &=& \exp (- \langle p,q \rangle_o) \\
 &=& \lim_{i\to \infty} \exp (- \langle x_i, y_i \rangle_o) \\
  &=& \lim_{i\to \infty} \exp (- \half( g(x_i,o) + g(y_i,o) - g(x_i,y_i ))) \\
    &=& \lim_{i\to \infty} \exp \bigg(
    -
     \log (   \frac{d_{\rm CC}(\pi(x_i), \pi(o))+ \max(h(x_i),h(o) ) }{\sqrt{h(x_i) h(o)} } )\\
     & & \hspace{3cm}
    -
    \log (   \frac{d_{\rm CC}(\pi(y_i), \pi(o))+ \max(h(y_i),h(o) ) }{\sqrt{h(y_i) h(o)} } )
    \\
     & & \hspace{5cm}
    +
    \log (   \frac{d_{\rm CC}(\pi(x_i), \pi(y_i))+ \max(h(x_i),h(y_i) ) }{\sqrt{h(x_i) h(y_i)} } )  \bigg) \\
        &=&
         \lim_{i\to \infty}  
   \bigg(   \frac{d_{\rm CC} (p, \pi(o))+ h(o)  }{\sqrt{h(x_i) h(o)} } \bigg)^{-1}
     \bigg(   \frac{d_{\rm CC}(q, \pi(o))+ h(o)  }{\sqrt{h(y_i) h(o)} } \bigg)^{-1}
        \frac{d_{\rm CC}(p,q)   }{\sqrt{h(x_i) h(y_i)} }    \\
                &=& 
  \frac{
 d_{\rm CC}(p,q) \,\,   h(o)    }{
   (d_{\rm CC}(p, \pi(o))+ h(o)  )(   d_{\rm CC}(q, \pi(o))+ h(o)  )}
        .
 \end{eqnarray*}
For every $p\in \p D$ one has 
$$\lim_{q\to p} \dfrac{ \rho^{g}_o ( p,q) }{d_{CC}(p,q)}
=
\lim_{q\to p} 
  \frac{
   h(o)    }{
   (d_{\rm CC}(p, \pi(o))+ h(o)  )(   d_{\rm CC}(q, \pi(o))+ h(o)  )}
   = 
  \frac{
   h(o)    }{
   (d_{\rm CC}(p, \pi(o))+ h(o)  )^2}
$$ 
so the limit exists, and  the identity map $ (\p D, d_{CC}) \to (\p D,\rho^g_o)$ is 1-quasi-conformal.
\qed

\subsection{Boundary distances of hyperbolic fillings}\label{bbs} An important contribution of  Bonk and Schramm \cite{MR1771428}, is that the functor $X\to \p_\infty X$ has an inverse functor, in the form of {\it hyperbolic filling} spaces $Con(Z)$. To be more precise,
 one defines $Con(Z) = Z \times (0,D)$, endowed with the metric  given by 
\begin{equation}\label{filling-metric}
d_2((x,u),(y,v))=2\log \bigg(   \frac{d_1(x, y)+ \max(u,v ) }{\sqrt{u v} } \bigg).
\end{equation}
The space $(Con(Z),d_2)$ is Gromov hyperbolic, and its visual boundary is $Z$, with the canonical class of snowflake equivalent  metrics   given by $d_1$. Here we note that a particular visual metric is actually conformal to $d_1$.
 We will consider the particular  visual metric generated by $g$ given by the Bourdon distance. Choose a generic base point choose a base point $o=(z,s)$, with  $z\in Z$ and $s\in (0,D)$. For any  two points $x,y\in  Z$ so that $d_1(x,y) < s$.
consider  $u,v \in (0, d_1(x,y) )$. Following \eqref{def:Bourdon:dist}, the Bourdon distance $d_2(x,y)$ is defined as follows
$$d_2(x,y)=\lim_{u,v\to 0} e^{-\langle ( x ,u)  ,  (y , v)  \rangle_o}.$$

Notice that in general,  
  Bourdon distances 
  associated to the hyperbolic fillings are a quasi-distance.
  By quasi-distance we intend   that  the triangle inequality is satisfied modulo a multiplicative constant.

\begin{proposition}
Let  $d_1$ a distance on a bounded space $Z$. If
$d_2$ denotes the 
  Bourdon distance associated to the hyperbolic filling  for $d_1 $,
  then $d_1 $ and $d_2$ are conformally equivalent.
  \end{proposition}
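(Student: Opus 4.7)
The plan is to reproduce, in the abstract Bonk--Schramm setting, the explicit computation carried out in the proof of Proposition~\ref{p1}, with the filling $(Z\times(0,D),d_2)$ playing the role of $(D,g)$ and $d_1$ playing the role of $d_{CC}$. Fix a base point $o=(z,s)$ and two distinct points $x,y\in Z$, and represent them by sequences $(x_i,u_i)$ and $(y_i,v_i)$ in the filling with $x_i\to x$, $y_i\to y$, and $u_i,v_i\to 0$.

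First, I would observe that for $i$ large enough $\max(u_i,s)=\max(v_i,s)=s$, and since $d_1(x,y)>0$ and $d_1(x_i,y_i)\to d_1(x,y)$, we also have $\max(u_i,v_i)\le d_1(x_i,y_i)$ eventually. Substituting into the formula \eqref{filling-metric} for $d_2$ and expanding the Gromov product $\langle(x_i,u_i),(y_i,v_i)\rangle_o$, the $\sqrt{u_i}$ and $\sqrt{v_i}$ factors cancel telescopically, and one obtains in the limit the closed form
\[
\rho^{d_2}_o(x,y) \;=\; \frac{s\,d_1(x,y)}{(d_1(x,z)+s)(d_1(y,z)+s)}.
\]
This expression is independent of the chosen representing sequences, so it automatically agrees with the $\sup\liminf$ definition of $\langle x,y\rangle_o$ at the boundary.

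From this explicit formula the conformal equivalence is immediate: since the denominator tends to $(d_1(x,z)+s)^2>0$ as $y\to x$, we obtain
\[
\lim_{y\to x}\frac{\rho^{d_2}_o(x,y)}{d_1(x,y)} \;=\; \frac{s}{(d_1(x,z)+s)^2}\in(0,\infty),
\]
which forces $\Lip_{\id}(x)=\ell_{\id}(x)$ at every $x\in Z$, hence $H^*(x,\id,d_1,\rho^{d_2}_o)\equiv 1$ and the identity is conformal in the sense of \eqref{hstella}. There is no substantive obstacle here; the argument is essentially a verbatim transcription of the one for Proposition~\ref{p1} with the complex-analytic content stripped away. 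The only minor bookkeeping point is that one must verify that the limit above realizes the supremum defining $\langle x,y\rangle_o$, but this is automatic precisely because the computation yields the same value for \emph{every} representing sequence.
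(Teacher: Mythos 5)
Your proof is correct and follows essentially the same route as the paper: expand the Gromov product using the explicit formula \eqref{filling-metric}, observe that the $\sqrt{u}$, $\sqrt{v}$ factors cancel, pass to the limit to obtain the closed form $\rho^{d_2}_o(x,y)=s\,d_1(x,y)/\bigl((d_1(x,z)+s)(d_1(y,z)+s)\bigr)$, and then compute the pointwise ratio with $d_1$. The only slight difference is cosmetic: the paper fixes the first coordinate and lets only $u,v\to 0$, whereas you allow general representing sequences $(x_i,u_i)$ with $x_i\to x$, which cleanly justifies that the limit agrees with the $\sup\liminf$ definition of the boundary Gromov product --- a small gain in rigor but not a different argument.
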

  \proof
 In order to show that  $d_1, d_2$ are conformally equivalent it suffices to prove that
 the limit $ \lim_{y\to x}d_1 (x,y) / d_2(x,y) $ exists for every $x\in Z$.
Fix any $z\in Z$ and $s\in (0,D)$. Let $o=(z,s)$.
Take two points $x,y\in  Z$ so that $d_1(x,y) < s$.
Take $u,v \in (0, d_1(x,y) )$.

The rest of the proof follows from
$$ 
\langle ( x ,u)  ,  (y , v)  \rangle_o
=\half( d_2(( x ,u),o) + d_2((y , v),o) - d_2(( x ,u),(y , v)) )$$
$$
= \log   \bigg(   \frac{d_1(x, z)+ \max(u,s ) }{\sqrt{u s} } \bigg)
+\log \bigg(   \frac{d_1(y, z)+ \max(v,s ) }{\sqrt{v s} } \bigg)
-\log \bigg(   \frac{d_1(x,y)+ \max(u,v) }{\sqrt{uv} } \bigg)
  $$
  $$
= \log  \frac{ (   d_1(x, z)+ s ) 
(    d_1(y, z)+ s  )}
 { s  (    d_1(x,y)+ \max(u,v)  )}
  $$
    $$
= \log \bigg( \frac{ (   d_1(x, z)+ s ) 
(    d_1(y, z)+ s  )}
 { s  (    d_1(x,y)+ \max(u,v)  )}
 \frac{ d_1(x,y) } { d_1(x,y)}  \bigg)
  $$
      $$
= - \log \bigg( d_1(x,y)  
    \bigg) +
   \log \bigg( 
 \frac{ d_1(x,y) (   d_1(x, z)+ s ) 
(    d_1(y, z)+ s  )}
  { s  (    d_1(x,y)+ \max(u,v)  )}
 \bigg)
  $$
  
  We calculate $ \lim_{y\to x}d_1 (x,y) / d_2(x,y) $. Consider the quotient
\begin{eqnarray*}
d_2 (x,y) / d_1(x,y)  &=& \lim_{u,v\to 0} \frac{e^{-\langle ( x ,u)  ,  (y , v)  \rangle_o }}{ d_1(x,y)}
\\
&=&\lim_{u,v\to 0} \frac{ e^{ \log \big( d_1(x,y)  
    \big) } e^{- \log \big( 
 \dfrac{ d_1(x,y) (   d_1(x, z)+ s ) 
(    d_1(y, z)+ s  )}
  { s  (    d_1(x,y)+ \max(u,v)  )}
 \big)  } } { d_1(x,y)}
 \\
 &=&
 \lim_{u,v\to 0} \bigg( 
 \frac{ d_1(x,y) (   d_1(x, z)+ s ) 
(    d_1(y, z)+ s  )}
  { s  (    d_1(x,y)+ \max(u,v)  )}
 \bigg)^{-1}\\
 &=&\frac{ s     d_1(x,y)  }{ d_1(x,y) (   d_1(x, z)+ s ) 
(    d_1(y, z)+ s  )  }
\\
&=& \frac{s}{ (   d_1(x, z)+ s ) 
(    d_1(y, z)+ s  )}.
\end{eqnarray*}

The latter implies that 
$$ \lim_{y\to x}\frac{d_2 (x,y) }{ d_1(x,y)}=  \frac{s}{ (   d_1(x, z)+ s )^2} ,$$
which gives the conclusion.
\qed

\section{Comparing $d$ and $g$, after Balogh and Bonk}

The  quantitative bounds on the distortion of the identity map in Proposition~\ref{p2} follow from the following result, which is a refinement of an analogue statement of Balogh and Bonk \cite[Corollary 1.3]{Balogh-Bonk}.  We follow largely their arguments, but where in \cite{Balogh-Bonk} the noise due to the rough geometry would yield an additive  constant, here instead we need to exploit the fact that the geometry is asymptotically hyperbolic to show that such constants can be chosen arbitrarily small the closer one gets to the boundary.

   \begin{theorem}\label{bb-better}
  For every $\bar p\in  \p D$ and $\e>0$ there exists
  $r>0$
 such that
    for all distinct $p,q\in \p D\cap B(\bar p,r)$  
    there exists
    $r'>0$ 
such that
    for all $ x \in  D\cap B(p,r') $ and all $y \in   D\cap B(q,r') $
 \begin{equation}\label{let-me-tell-ya}
 g(x, y) -\e \le d_K(x, y)\le g(x, y) +\e .
 \end{equation}
  \end{theorem}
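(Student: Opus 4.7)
The plan is to revisit Balogh and Bonk's proof of \cite[Corollary 1.3]{Balogh-Bonk} and track how each error depends on the parameter $\bar\e$ of \eqref{bb-1.4} and on $\sqrt{d_E(\cdot,\partial D)}$. Both perturbative factors in \eqref{bb-1.4}---namely $(1+C_0\sqrt{d_E})$ and $(1\pm\bar\e)^{1/2}$---can be driven arbitrarily close to $1$: the first by restricting to a Euclidean $r$-neighborhood of $\bar p$, the second by choosing $\bar\e$ small in Proposition \ref{bb-estimate} (at the price of shrinking $\delta_0$, which is harmless since $r$ is chosen afterwards). Balogh and Bonk's additive constant will become $\le\e$ once one verifies that, even though $d_K(x,y)$ grows like $|\log r'|$ as $r'\to 0$, the \emph{integrated} errors along the paths involved depend only on $r$ and $\bar\e$, not on the depths $h(x),h(y)$.

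For the upper bound $d_K(x,y)\le g(x,y)+\e$, construct a competitor $\gamma$ from $x$ to $y$ built of three pieces: a normal segment lifting $x$ to a point at depth $t$; a horizontal traverse at depth $t$ obtained by lifting a near-minimizer for the Riemannian approximation $g_k$ (with $k\sim 1/\sqrt t$, as in Lemma \ref{approx}) between $\pi(x)$ and $\pi(y)$; and a normal descent to $y$. Integrate \eqref{bb-1.4} along $\gamma$: the three legs contribute, to leading order, $\tfrac12\log(t/h(x)^2)$, $d_{CC}(\pi(x),\pi(y))/\sqrt t$ and $\tfrac12\log(t/h(y)^2)$ respectively. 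Optimizing $t$ around $d_{CC}(\pi(x),\pi(y))^2/4$---and, if needed, smoothing the path to approximate a model hyperbolic geodesic so as to eliminate the $O(1)$ gap between the naive piecewise estimate and the true distance---recovers the leading-order form of $g(x,y)$ from \eqref{filling-metric-cc}. The additive slack contains a term $C_1\int_\gamma\sqrt{d_E}$ (against the arc-length of the idealized metric appearing inside the square roots of \eqref{bb-1.4}) coming from the factor $(1+C_0\sqrt{d_E})$, plus a term $C_2\bar\e$ coming from $(1+\bar\e)^{1/2}$ on the horizontal leg; both are $O(\sqrt r+\bar\e)$ \emph{uniformly in} $h(x),h(y)$, because the weight $\sqrt{d_E}$ is controlled by the maximum depth $t\lesssim r^2$, while the horizontal part of the optimized path has idealized length of order one.

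For the lower bound $d_K(x,y)\ge g(x,y)-\e$, let $\gamma$ be a $d_K$-near-geodesic from $x$ to $y$. Gromov hyperbolicity of $(D,d_K)$ \cite[Theorem 1.4]{Balogh-Bonk} combined with the proximity of $x,y$ to $\bar p$ confines $\gamma$ to a Euclidean $O(r)$-ball around $\bar p$, where \eqref{bb-1.4} holds with the good constants above. Applying its lower estimate and separating the integrand into normal and horizontal parts, the normal contribution to the $K$-length is bounded below by $\tfrac12\log(h^*/h(x)^2)+\tfrac12\log(h^*/h(y)^2)-o(1)$, with $h^*$ the maximum depth on $\gamma$, while the horizontal contribution, after projecting $\gamma$ to $\partial D$ and invoking Lemma \ref{approx}, dominates $d_{CC}(\pi(x),\pi(y))/\sqrt{h^*}-o(1)$. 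Minimizing over $h^*$ reproduces exactly the algebraic expression of the upper bound, yielding $d_K(x,y)\ge g(x,y)-\e$.

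The principal technical obstacle, and the only substantive novelty over \cite{Balogh-Bonk}, is proving that the weighted integrals $\int_\gamma\sqrt{d_E}$ and the analogous horizontal quantities remain bounded by $O(\sqrt r)$ and $O(\bar\e)$ respectively, \emph{uniformly in} $h(x),h(y)$, even though the total idealized length of $\gamma$ diverges as the endpoints approach $\partial D$. This is the quantitative content of the asymptotic hyperbolicity alluded to in the paragraphs preceding the theorem: along a (nearly) hyperbolic arc, most of the length is concentrated in the steep ``ascent'' and ``descent'' near the endpoints where $d_E$ is of order $h(x)^2$ or $h(y)^2$, so the weight $\sqrt{d_E}$ is negligible precisely where the arc-length is large; the weighted integral is therefore dominated by the short ``top'' portion of $\gamma$ at depth $O(r^2)$. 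Once this bound is established, both the upper and lower estimates reduce to careful bookkeeping, and \eqref{let-me-tell-ya} follows.
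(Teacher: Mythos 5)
Your overall strategy (revisit the Balogh--Bonk estimates and track the dependence of the additive error on $\bar\e$ and on $\sqrt{d_E}$) is exactly the paper's, and your three-piece competitor for the upper bound corresponds to Lemma~\ref{bb-one is a base point} and Lemma~\ref{bb-both are arbitrarily close2} in the paper (the paper lifts to Euclidean depth $d_{CC}(p_1,p_2)$ rather than $d_{CC}^2/4$, and does not invoke any ``smoothing''; the $O(1)$ defect you flag in the naive piecewise estimate is a legitimate concern, but ``smoothing the path to approximate a model hyperbolic geodesic'' is not fleshed out and is nowhere near a proof). The observation that Gromov hyperbolicity confines a near-geodesic to a Euclidean $O(r)$--ball is not used in the paper; the paper instead works directly with the explicit integral bounds.

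The genuine gap is in your lower bound. You propose to bound the ``normal contribution'' below by $\tfrac12\log(h^*/h(x)^2)+\tfrac12\log(h^*/h(y)^2)$ and the ``horizontal contribution'' below by $d_{CC}/\sqrt{h^*}$, and then \emph{add} these. But both quantities are lower bounds for the \emph{same} integral $l_K(\gamma)=\int (a^2+b^2)^{1/2}$, where $a$ is the normal density $|\gamma'_N|/(2d_E)$ and $b$ the horizontal density $L_\defun^{1/2}(\gamma'_H)/\sqrt{d_E}$. From $(a^2+b^2)^{1/2}\ge a$ and $(a^2+b^2)^{1/2}\ge b$ one only gets $l_K\ge\max\{\int a,\int b\}$, never their sum. (The Cauchy--Schwarz refinement $l_K\ge\sqrt{(\int a)^2+(\int b)^2}$ still falls short by a factor up to $\sqrt2$, which is fatal since $d_K(x,y)\to\infty$.) The paper circumvents this with the Balogh--Bonk slicing device: $\gamma$ is sliced by levels $s_l$, and for each slice either the boundary displacement is small (Alternative $\#1$), in which case the \emph{entire} sub-arc is charged to the normal term, or it is large (Alternative $\#2$), in which case that \emph{specific} sub-arc is charged to the horizontal term via the approximating Riemannian metric $d_{\nu_l}$ and Lemma~\ref{approx}. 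The normal and horizontal bounds are thus realized on disjoint portions of $\gamma$ and may be added. The dichotomy, the thresholds $\nu_l$, and the case-by-case recombination of $(A1)$/$(A2)$ with $(B1)$/$(B2)$ constitute the real content of the lower bound, and none of it appears in your sketch. Finally, your remark that minimizing $\log h^*-\log(h(x)h(y))+d_{CC}/\sqrt{h^*}$ over $h^*$ ``reproduces exactly the algebraic expression of the upper bound'' is not correct: the minimum is $2\log d_{CC}-\log(h(x)h(y))+2-\log4$, which exceeds $g$ by the fixed constant $2-\log4$, so even if the summation were legitimate it would not yield $g-\e$.
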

 In the rest of the paper we will refer to this result in connection with the quintuplet $(\bar p, p,q,x,y)$.
   
%

 \subsection {Lemmata}
  The proof of Theorem~\ref{bb-better} is based on preliminary estimates established in Lemma~\ref{bb-one is a base point},
  Lemma~\ref{bb-both are arbitrarily close2}, 
  and  Lemma~\ref{curve goes far from boundary} below.
  %
%
%

\begin{lemma}\label{bb-one is a base point}
Let $\delta_0>0$ to be the constant in Lemma~\ref{bb-background}.
For $x_1,x_2\in D$ with $d_E(x_i, \p D)<\delta_0$, and $h(x_1)\ge h(x_2)$, consider a piecewise $C^1$ curve $\gamma:[0,1]\to N_{\delta_0}$ with $\gamma(0)=x_1$ and $\gamma(1)=x_2$. The length $l_K(\gamma)$ of $\gamma$ with respect to the metric $d_K$
 satisfies
\begin{equation}\label{bound-below}
l_K(\gamma)\ge \ln \frac{h(x_1)}{h(x_2)} - C\bigg(h(x_1)- h(x_2)\bigg),
\end{equation}
where $C$ is the same constant as in \eqref{bb-1.4}.
Moreover, if  the curve is a segment $\gamma(t)=x_1+t(x_2-p_1)\subset \pi^{-1}(p)$ for some $p\in \p D$ then one has
\begin{equation}\label{bound-above}
l_K(\gamma)\le \ln \frac{h(x_1)}{h(x_2)} + C\bigg(h(x_1)- h(x_2)\bigg),
\end{equation}
\end{lemma}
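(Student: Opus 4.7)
The strategy is to integrate the Balogh--Bonk inequality \eqref{bb-1.4} along $\gamma$, converting the complex-normal part of $K$ into a one-dimensional total-variation estimate for the real-valued function $h\circ\gamma$.

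For the lower bound, I drop the nonnegative Levi-form contribution in \eqref{bb-1.4} to obtain $K(\gamma(t),\gamma'(t)) \ge (1-Ch(\gamma(t)))\,\frac{|\gamma'(t)_N|}{2h(\gamma(t))^2}$, where $\gamma'(t)_N$ is the complex-normal component of $\gamma'(t)$ at $\pi(\gamma(t))$. The key geometric input is that $|\gamma'(t)_N|$ controls $|(h\circ\gamma)'(t)|$: since $\defun$ is a signed distance function, $|\nabla\defun|\equiv 1$ on $N_{\delta_0}$ and the real unit normal $\nabla\defun(\pi(x))=n(\pi(x))$ lies inside the complex-normal line $N_{\pi(x)}\partial D$. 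Hence $|\gamma'(t)_N|\ge|\langle\nabla\defun(\gamma(t)),\gamma'(t)\rangle_{\R^{2n}}| = \bigl|\frac{d}{dt}\defun(\gamma(t))\bigr| = 2h(\gamma(t))\,|(h\circ\gamma)'(t)|$. Integrating gives $l_K(\gamma) \ge \int_0^1 \frac{|(h\circ\gamma)'(t)|}{h(\gamma(t))}\,dt - C\int_0^1 |(h\circ\gamma)'(t)|\,dt$.

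These two integrals are the total variations of $\ln(h\circ\gamma)$ and $h\circ\gamma$. Decomposing $h\circ\gamma$ into monotone pieces, write $V^\pm$ for its up- and down-variations and $U^\pm$ for those of $\ln(h\circ\gamma)$. The identities $U^- - U^+ = \ln(h(x_1)/h(x_2))$ and $V^- - V^+ = h(x_1)-h(x_2)$ reduce \eqref{bound-below} to the inequality $U^+\ge CV^+$. On any monotone piece where $h$ moves from $\alpha$ to $\beta>\alpha$, one has $\ln(\beta/\alpha)\ge (\beta-\alpha)/\beta \ge (\beta-\alpha)/\sqrt{\delta_0}$. Shrinking $\delta_0$ if necessary so that $C\sqrt{\delta_0}\le 1$ (which simultaneously ensures $1-Ch(\gamma(t))\ge 0$ along $\gamma$) gives $U^+\ge CV^+$, and \eqref{bound-below} follows.

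For the upper bound the segment $\gamma(t) = x_1+t(x_2-x_1)$ lies in $\pi^{-1}(p)$, so $\gamma'(t)$ is purely normal and $|\gamma'(t)|=|\gamma'(t)_N|=h(x_1)^2-h(x_2)^2$. With $s(t):=d_E(\gamma(t),\partial D) = (1-t)h(x_1)^2+t\,h(x_2)^2 = h(\gamma(t))^2$, the upper inequality in \eqref{bb-1.4} reads $K(\gamma(t),\gamma'(t)) \le (1+C\sqrt{s(t)})\,\frac{h(x_1)^2-h(x_2)^2}{2\,s(t)}$. The substitution $u=s(t)$ yields $\int_0^1 \frac{h(x_1)^2-h(x_2)^2}{2\,s(t)}\,dt = \ln(h(x_1)/h(x_2))$ and $\int_0^1 \frac{h(x_1)^2-h(x_2)^2}{2\sqrt{s(t)}}\,dt = h(x_1)-h(x_2)$, and summing gives \eqref{bound-above}. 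The only delicate point is the lower bound for oscillatory curves: a priori $\int|(h\circ\gamma)'|\,dt$ can be arbitrarily large compared to $h(x_1)-h(x_2)$, but the step $U^+\ge CV^+$ resolves this because on the small tube $N_{\delta_0}$ the logarithmic variation of $h$ dominates its linear variation by the factor $1/\sqrt{\delta_0}\ge C$, so the potentially large up-variation of $h$ is absorbed by the corresponding up-variation of $\ln h$.
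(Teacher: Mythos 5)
Your proof is correct and follows essentially the same route as the paper's: both hinge on the Balogh--Bonk bound \eqref{bb-1.4}, on the relation $|\gamma_N'(t)| \ge 2h(\gamma(t))\,|(h\circ\gamma)'(t)|$ (the paper cites this from \cite[p.~517]{Balogh-Bonk}; you re-derive it from the signed distance function), and on the same change of variables for the upper bound along the normal segment.

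The one genuine divergence is in the lower bound. The paper argues pointwise: since $1-Ch(\gamma(t))\ge 0$, one can write $(1-Ch)\,|(h\circ\gamma)'|\ge -(1-Ch)\,(h\circ\gamma)'$ and integrate the right-hand side directly to get $\ln(h(x_1)/h(x_2))-C(h(x_1)-h(x_2))$. (In the paper's display the minus sign, or an absolute value, is missing in front of $\tfrac{d}{dt}h(\gamma(t))$, but the stated conclusion is the intended one.) You instead split the total variations of $\ln(h\circ\gamma)$ and of $h\circ\gamma$ into up- and down-variations $U^\pm,V^\pm$, reduce the inequality to $U^+\ge CV^+$, and verify this monotone-piecewise. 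Your route makes the ``oscillation cost'' visible explicitly, while the paper's pointwise inequality absorbs it in one line; note your step $U^+\ge CV^+$ is exactly the pointwise fact $(h\circ\gamma)'_+\bigl(\tfrac{1}{h\circ\gamma}-C\bigr)\ge 0$ in disguise, and one can in fact prove it directly that way without decomposing into monotone pieces (which avoids any worry about infinitely many sign changes of $(h\circ\gamma)'$). Both arguments need $C\sqrt{\delta_0}\le 1$, which you state explicitly and the paper leaves implicit; this is a harmless shrinking of $\delta_0$. The upper bound computation is identical (you also correct a small typo in the paper's formula for $d_E(\gamma(t),\partial D)$ on the segment, which should be $(1-t)h(x_1)^2+t\,h(x_2)^2$).
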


\begin{proof}
Recall from \cite[page 517]{Balogh-Bonk} that 
$$\bigg| \frac{d}{dt} h(\gamma(t)) \bigg| =\frac { | Re\langle n(\pi(\gamma(t))), \gamma'(t)\rangle |}{2h(\gamma(t))} \le \frac{|\gamma_N'(t)|}{2 h(\gamma(t))}.$$
The latter and \eqref{bb-1.4}  yield
\begin{eqnarray*}
l_K(\gamma)&=& \int_0^1 K(\gamma(t), \gamma'(t)) dt \\
& \ge&
\int_0^1 (1- C d_E^{\frac{1}{2}}(\gamma(t), \p D)) \bigg( 
\frac{ |\gamma_N'(t)|^2}{4 d_E^2(\gamma(t), \p D)} + (1-\bar\e) \frac{ L_\defun(\pi(\gamma(t), \gamma_H'(t)) }{ d_E(\gamma(t), \p D)}  \bigg)^\frac{1}{2} dt 
\\ &\ge& \int_0^1 (1- C d_E^{\frac{1}{2}}(\gamma(t), \p D))
\frac{ |\gamma_N'(t)|}{2 d_E(\gamma(t), \p D)}dt  \ge  \int_0^1 \frac{(1- C h(\gamma(t))}{h(\gamma(t))} \frac{d}{dt} h(\gamma(t)) dt
\\ &=& \ln \frac{h(x_1)}{h(x_2)} - C (h(x_1)-h(x_2)),
\end{eqnarray*}
which gives \eqref{bound-below}.

On the other hand, if $\gamma(t)=x_1+t(x_2-x_1)$, then we observe that $\gamma'$ is parallel to the unit normal at $\pi(x_i)$ and so has no tangent component, hence no horizontal component with respect to the splitting at $\pi(x_i)$.  Using the fact that $$ d_E(\gamma(t),\p D)= |\gamma(t)-p|=| t (x_2-x_1)|+|x_1-p|,$$  and \eqref{bb-1.4}  one has
\begin{eqnarray*}
l_K(\gamma)&= &\int_0^1 K(\gamma(t), \gamma'(t)) dt \\
& \le&
\int_0^1 (1+ C d_E^{\frac{1}{2}}(\gamma(t), \p D)) \bigg( 
\frac{ |\gamma_N'(t)|^2}{4 d_E^2(\gamma(t), \p D)} + (1+\bar\e) \frac{ L_\defun(\pi(\gamma(t), \gamma_H'(t)) }{ d_E(\gamma(t), \p D)}  \bigg)^\half dt 
\\ &=& \frac{1}{2} \int_0^1 \frac{ |x_2-x_1|}{ t |x_2-x_1|+ |x_1-p|} dt + \frac{C}{2}  \int_0^1  \frac{1}{\sqrt{  t |x_2-x_1|+ |x_1-p| }} dt \\
& = &\ln \frac{h(x_1)}{h(x_2)} + C (h(x_1)-h(x_2)),
\end{eqnarray*}
which gives \eqref{bound-above}.
\end{proof}

An immediate consequence of Lemma \ref{bb-one is a base point} is the following.
\begin{corollary}\label{sulla stessa fibra}  Let $\delta_0>0$ to be the constant in Proposition~\ref{bb-background}.
If  $x_1,x_2\in D$, with  $\delta_0>h(x_1)\ge h(x_2)$, then 
\begin{equation*}
\ln \frac{h(x_1)}{h(x_2)} - C\big(h(x_1)- h(x_2)\big) \le d_K(x_1,x_2).\end{equation*}
Moreover, if  $\pi(x_1)=\pi(p_2)$,
then we also have
\begin{equation}\label{quattrodieci}
 d_K(x_1,x_2) \le \ln \frac{h(x_1)}{h(x_2)} + C\big(h(x_1)- h(x_2)\big)
\end{equation}
where $C$ is the same constant as in \eqref{bb-1.4}.
\end{corollary}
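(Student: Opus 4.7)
The plan is to derive both inequalities directly from Lemma~\ref{bb-one is a base point}; indeed, no new geometric input is needed beyond translating the length bounds of the lemma into distance bounds.

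For the \emph{upper} bound, under the hypothesis $\pi(x_1)=\pi(x_2)$, the natural test curve is the radial segment $\gamma(t):=x_1+t(x_2-x_1)$. It lies on the single fiber $\pi^{-1}(\pi(x_1))$, and after possibly shrinking $\delta_0\le 1$ so that $h(x_i)<\delta_0$ forces $d_E(x_i,\partial D)<\delta_0^2\le \delta_0$, the segment stays inside $N_{\delta_0}$. Inequality \eqref{bound-above} of the lemma then applies verbatim and gives
\[
d_K(x_1,x_2)\le l_K(\gamma)\le \ln\frac{h(x_1)}{h(x_2)}+C\big(h(x_1)-h(x_2)\big),
\]
which is the second assertion of the corollary.

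For the \emph{lower} bound, fix $\eta>0$ and choose a piecewise $C^1$ curve $\gamma$ from $x_1$ to $x_2$ with $l_K(\gamma)\le d_K(x_1,x_2)+\eta$. If $\gamma\subset N_{\delta_0}$, inequality \eqref{bound-below} applied directly to $\gamma$ gives
\[
l_K(\gamma)\ge \ln\frac{h(x_1)}{h(x_2)}-C\big(h(x_1)-h(x_2)\big),
\]
and letting $\eta\to 0$ finishes this case. The only case requiring care is when $\gamma$ leaves the tubular neighborhood. I would then set $s_1:=\inf\{t:\gamma(t)\notin N_{\delta_0}\}$ and $s_2:=\sup\{t:\gamma(t)\notin N_{\delta_0}\}$, note that $h(\gamma(s_i))=\sqrt{\delta_0}$, and observe that $\gamma|_{[0,s_1]}$ and $\gamma|_{[s_2,1]}$ lie in $N_{\delta_0}$. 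Applying \eqref{bound-below} to each (after reversing parametrization so that the endpoint with larger $h$ plays the role of the initial point) and summing yields
\[
l_K(\gamma)\ge \ln\frac{\delta_0}{h(x_1)h(x_2)}-2C\sqrt{\delta_0}+C\big(h(x_1)+h(x_2)\big).
\]

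The main---indeed only---obstacle is to verify that this wandering case does not violate the desired bound. Writing $\ln(\delta_0/(h(x_1)h(x_2)))=\ln(h(x_1)/h(x_2))+\ln(\delta_0/h(x_1)^2)$, the surplus over the target $\ln(h(x_1)/h(x_2))-C(h(x_1)-h(x_2))$ is $\ln(\delta_0/h(x_1)^2)-2C\sqrt{\delta_0}+2Ch(x_1)\ge -\ln\delta_0-2C\sqrt{\delta_0}$, using $h(x_1)<\delta_0$. This is nonnegative once $\delta_0$ is taken sufficiently small (say $-\ln\delta_0\ge 2C\sqrt{\delta_0}$), a condition that can be arranged by shrinking $\delta_0$ further from the start. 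Thus both inequalities follow, confirming the authors' description of the corollary as an \emph{immediate} consequence of Lemma~\ref{bb-one is a base point}.
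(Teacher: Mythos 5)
Your argument is correct and is the natural way to read the authors' claim that the corollary is ``an immediate consequence'' of Lemma~\ref{bb-one is a base point}. The upper bound is indeed immediate: the radial segment lies in a single fiber, stays in $N_{\delta_0}$ once $\delta_0\le 1$, and \eqref{bound-above} applies verbatim. For the lower bound you correctly notice that the lemma only controls curves that stay inside $N_{\delta_0}$, a point the paper glosses over. Your split at the first and last exit times $s_1,s_2$, with the two endpoint branches reparametrized so that the higher endpoint comes first, is exactly the kind of argument the authors deploy later in Lemma~\ref{curve goes far from boundary}, so it is very much in the spirit of the paper. The arithmetic of the surplus $\ln(\delta_0/h(x_1)^2)-2C\sqrt{\delta_0}+2Ch(x_1)\ge -\ln\delta_0-2C\sqrt{\delta_0}$ is correct, and the condition $-\ln\delta_0\ge 2C\sqrt{\delta_0}$ is indeed achievable by shrinking $\delta_0$. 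One cosmetic remark: $\gamma|_{[0,s_1]}$ lands in $\overline{N_{\delta_0}}$ rather than $N_{\delta_0}$, since $\gamma(s_1)$ sits on its boundary; this is harmless because the estimate \eqref{bb-1.4} and the projection $\pi$ extend to the closed neighborhood (or one can work on $[0,s_1-\eta]$ and let $\eta\to0$). In short: the proposal is correct, matches the intended approach, and in fact makes explicit a small case distinction the paper leaves tacit.
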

\bigskip

\bigskip

The next lemma provides an upper bound for $d_K(x_1,x_2)$  in the case when both points $x_1,x_2$ are at the same distance from the boundary and equal to the Carnot-Carath\'eodory distance between the projections $\pi(x_1), \pi(x_2)$.

\begin{lemma}\label{bb-both are arbitrarily close2}
Let $p_1,p_2\in \p D$. If we  set $ x_i := p_i - d_{CC}(p_1,p_2) n(p_i)  $, $i=1,2$,
then 
\begin{equation}
d_K(x_1,x_2)\to 0, \qquad \text{as } \,d_{CC}(p_1,p_2)\to 0.
\end{equation}  
%
\end{lemma}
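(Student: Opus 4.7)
The plan is to exhibit an explicit curve from $x_1$ to $x_2$ whose $d_K$-length tends to $0$ with $T := d_{CC}(p_1,p_2)$; in fact I will argue it is $O(\sqrt T)$. The first step is to fix, for each small $T$, a horizontal curve $\eta:[0,T]\to \p D$ of (approximately) Levi-unit speed joining $p_1$ and $p_2$. Such an $\eta$ exists because the Levi length of such a joining curve may be made $\le T(1+o(1))$ by definition of $d_{CC}$. For $T<\delta_0$ I then lift $\eta$ inward to constant Euclidean depth $T$ by setting
$$\gamma(t) := \eta(t) - T\, n(\eta(t)), \qquad t\in [0,T].$$
Then $\gamma(0)=x_1$, $\gamma(T)=x_2$, the projection $\pi(\gamma(t))=\eta(t)$, and $d_E(\gamma(t),\p D)=T$.

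Next I would analyze the tangent vector along $\gamma$. Differentiating,
$$\gamma'(t) = \eta'(t) - T\, D_{\eta'(t)}\, n(\eta(t)).$$
The Weingarten term has Euclidean norm $O(T)$, since $|\eta'(t)|_E$ is bounded (because $\eta$ is Levi-unit speed on a compact $C^\infty$ boundary) and the second fundamental form of $\p D$ is bounded. Decomposing with respect to the splitting at $\eta(t)$, and recalling that $\eta'(t)\in H_{\eta(t)}\p D$, one then obtains $|\gamma_N'(t)|_E = O(T)$ and $\gamma_H'(t) = \eta'(t) + O(T)$. By bilinearity and smoothness of $L_\defun$, this gives $L_\defun(\eta(t), \gamma_H'(t)) = 1 + O(T)$, uniformly in $t$.

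Finally I would plug these estimates into the upper bound of Proposition~\ref{bb-estimate} at $\gamma(t)$, using $d_E(\gamma(t),\p D)=T$:
$$K(\gamma(t), \gamma'(t)) \le (1 + C\sqrt T)\left( \frac{O(T^2)}{4T^2} + (1+\bar\e)\,\frac{1 + O(T)}{T}\right)^{1/2} = O(T^{-1/2}).$$
Integrating over $[0,T]$ gives $d_K(x_1,x_2)\le l_K(\gamma) = O(\sqrt T)\to 0$ as $T\to 0$, which is the claim.

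The only delicate point is the uniform bookkeeping in the second step: one must check that the implicit constants coming from the Weingarten map and from the bilinearity of $L_\defun$ are controlled uniformly in a neighborhood (for instance, one can work in a fixed compact neighborhood of the closure of all the $\eta$ as $T\to 0$, over which $\p D$ is smooth and all the geometric quantities are bounded). Apart from this, the estimate is a direct consequence of \eqref{bb-1.4} and the fact that the inward-lifted horizontal path of Euclidean length $O(T)$ is traversed in $K$-speed $O(T^{-1/2})$.
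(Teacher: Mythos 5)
Your proof is correct and follows essentially the same route as the paper: lift a near-geodesic horizontal curve $\alpha$ inward to Euclidean depth $h=T=d_{CC}(p_1,p_2)$, use the Weingarten estimates $|\gamma_N'|=O(T)$ and $L_\defun(\pi(\gamma),\gamma_H')=L_\defun(\alpha,\alpha')+O(T)$, and plug into the upper bound of \eqref{bb-1.4} to get $l_K(\gamma)=O(\sqrt T)$. The only cosmetic differences are your $[0,T]$ Levi-unit-speed parametrization versus the paper's $[0,1]$, and that the paper keeps the depth $h$ free before setting $h=T$ at the end.
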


\begin{proof}
Let $\eta>0$ and let  $\alpha:[0,1]\to \p D$ be any horizontal curve with $\alpha(0)=p_1$ and $\alpha (1)=p_2$, such that its subRiemannian length $l_{CC}$, satisfies
 $$l_{CC}(\alpha)= \int_0^1 L^{\frac{1}{2}}_\defun(\alpha(t), \alpha'(t)) dt \le d_{CC}(p_1,p_2)(1+\eta).$$ Define a new curve $\gamma:[0,1]\to D$ as a {\it lift} at height $h\in (0,\delta_0)$ of $\alpha$ by the formula
\begin{equation}\label{lift-up}
\gamma(t):= \alpha(t) - h \  n(\alpha(t)).
\end{equation}
Arguing as in the proof of  \cite[Lemma 2.2]{Balogh-Bonk}  yields the following relations between $\alpha' $ and $\gamma'$,
\begin{eqnarray}\label{lift-est}
 L(\alpha(t), \gamma_H'(t)) &=&L(\alpha(t), \alpha'(t))  +O(h |\alpha'(t)|^2 ) \\
|\gamma_N'(t)| &=& O(h |\alpha'(t)|). \notag
\end{eqnarray}
In fact, from \eqref{lift-up} one has $\gamma'(t)|_H= \alpha'(t)-[dn|_{\alpha(t)} \alpha'(t)]_H$ which,  together with the bilinearity of the Levi form, yields \eqref{lift-est}.
Consequently we have
\begin{eqnarray*}
l_K(\gamma)&=& \int_0^1 K(\gamma(t), \gamma'(t)) dt \\
& \le&
\int_0^1 (1+ C d_E^{\frac{1}{2}}(\gamma(t), \p D)) \bigg( 
\frac{ |\gamma_N'(t)|^2}{4 d_E^2(\gamma(t), \p D)} + (1+\bar\e) \frac{ L_\defun(\pi(\gamma(t), \gamma_H'(t)) }{ d_E(\gamma(t), \p D)}  \bigg)^\half dt 
\\
&=& \int_0^1 (1 +C h^{\half} ) \bigg( 
\frac{ |\gamma_N'(t)|^2}{4 h^2} + (1+\bar\e) \frac{ L_\defun(\pi(\gamma(t), \gamma_H'(t)) }{ h}  \bigg)^\half dt 
\\ &\le& \int_0^1 (1 +C h^{\half} ) \bigg(  C |\alpha'(t)|^2 + (1+\bar\e) \frac{L_\defun(\alpha(t), \alpha'(t))}{h}
 \bigg)^\half dt 
 \\ &\le&
 (1+C\sqrt{h}) (1+\eta) \bigg[C d_{CC}(p_1,p_2)  + (1+\bar\e)^\half h^{-\half} d_{CC}(p_1,p_2)\bigg]
 .
\end{eqnarray*}
Setting $h=d_{CC}(x_1,x_2)$ in the latter yields the conclusion.
\end{proof}

The next lemma will be instrumental  in establishing a lower bound for $d_K(x_1,x_2)$  in the case when a length minimizing arc $\gamma$ joining two   points   $x_1,x_2\in D$ will travel at a distance further than the Carnot-Carath\'eodory distance between their projections.

\begin{lemma}\label{curve goes far from boundary}
Let $\delta_0>0$ be smaller than the similarly named constants in Propositions~\ref{bb-background} and \ref{bb-estimate}.
Consider two points $x_1,x_2\in D$ with $d_E(x_i,\p D)  <\delta_0$.
Set  $p_i=\pi (x_i)\in \p D$, and let $\gamma:[0,1]\to D$ denote an arc joining $x_1$ and $x_2$.
If $ \max_{z\in \gamma} h(z)\ge d_{CC}(p_1,p_2)$  then
\begin{equation}\label{bound-above-h}
l_K(\gamma) \ge 
2 \ln \bigg( \frac{d_{CC}(p_1,p_2)}{\sqrt{h(x_1)h(x_2)} } \bigg)- C(2d_{CC}(p_1,p_2)- h(x_1)-h(x_2)),
\end{equation} 
where $C$ is the same constant as in \eqref{bb-1.4}. 
\end{lemma}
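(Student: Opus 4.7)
The plan is to split $\gamma$ into two sub-arcs at a point $z$ of appropriate height, and apply the length estimate \eqref{bound-below} of Lemma~\ref{bb-one is a base point} to each piece separately. The factor of $2$ appearing in front of the logarithm in \eqref{bound-above-h} is exactly what the two-sided height drop across $z$ contributes: once going up from $x_1$ to $z$, and once coming back down from $z$ to $x_2$.

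Set $H := d_{\rm CC}(p_1, p_2)$ and consider the principal case where $h(x_i) \leq H$ for $i = 1, 2$. This is the regime that matters for the application in Theorem~\ref{bb-better}, since there the points $x, y$ are taken close to the boundary while $d_{CC}(p,q)$ is held fixed. Because $h$ is continuous along $\gamma$, takes values $h(x_i) \leq H$ at the endpoints, and by hypothesis attains some value $\geq H$ in between, the intermediate value theorem produces $\tau \in [0,1]$ with $h(\gamma(\tau)) = H$. Set $z := \gamma(\tau)$ and write $\gamma_1 := \gamma|_{[0,\tau]}$, $\gamma_2 := \gamma|_{[\tau, 1]}$.

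Since $h(z) = H \geq h(x_i)$, Lemma~\ref{bb-one is a base point} applies to each sub-arc (with $z$ playing the role of the higher-altitude endpoint) and yields
\begin{equation*}
l_K(\gamma_i) \;\geq\; \ln\frac{H}{h(x_i)} \;-\; C\bigl(H - h(x_i)\bigr), \qquad i = 1, 2.
\end{equation*}
Adding these two inequalities and using $l_K(\gamma) = l_K(\gamma_1) + l_K(\gamma_2)$ gives exactly \eqref{bound-above-h}.

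The corner cases where $h(x_1)$ or $h(x_2)$ exceeds $H$ require a minor modification: instead of splitting at a height-$H$ point, one splits at a point $z$ where $h$ attains its maximum $M \geq H$ along $\gamma$, applies \eqref{bound-below} in both directions exactly as above to produce a bound in terms of $M$, and then trades this for the $H$-bound using that $t \mapsto \ln t - Ct$ is non-decreasing on $(0, 1/C]$ -- a regime one may enforce by shrinking $\delta_0$ at the outset. The main technical point throughout is bookkeeping: keeping careful track of which endpoint of each sub-arc has the larger height so that the hypothesis of Lemma~\ref{bb-one is a base point} is applied in the correct direction. There is no deeper obstacle; the essential content is the reuse of \eqref{bound-below} on both halves of the curve.
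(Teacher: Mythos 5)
Your decomposition idea is morally right (the factor of $2$ does come from two height drops), but as written the proof has a genuine gap: you cannot apply Lemma~\ref{bb-one is a base point} to the full sub-arcs $\gamma_1 = \gamma|_{[0,\tau]}$ and $\gamma_2 = \gamma|_{[\tau,1]}$. That lemma requires its curve to be valued in $N_{\delta_0}$ --- the tubular neighborhood where $\pi$ is defined and the Balogh--Bonk estimate \eqref{bb-1.4} applies --- and the lemma you are proving places no such restriction on $\gamma$, which is only assumed to lie in $D$. With a single cut at a point $z$ of height $H$, whichever piece contains the maximizer of $h$ along $\gamma$ may well exit $N_{\delta_0}$, and then \eqref{bound-below} simply does not apply to that piece. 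Your corner-case fallback (cutting at the absolute maximizer, height $M$) makes the problem strictly worse, since both resulting sub-arcs then have an endpoint at the largest height attained, and in addition the monotonicity trade ($\ln t - Ct$ increasing on $(0,1/C]$) requires $M \leq 1/C$, which you have no control over.

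The paper's proof resolves this by cutting \emph{twice} rather than once. With $t_0$ the maximizer, define $t_1 \leq t_0$ to be the \emph{first} time $h(\gamma(\cdot))$ reaches $H = d_{\rm CC}(p_1,p_2)$ and $t_2 \geq t_0$ the \emph{last} such time. The trimmed arcs $\bar\gamma_1 = \gamma|_{[0,t_1]}$ and $\bar\gamma_2 = \gamma|_{[t_2,1]}$ stay at height at most $H$, hence (in the regime $H^2 < \delta_0$ where the lemma is actually used) inside $N_{\delta_0}$, so Lemma~\ref{bb-one is a base point} legitimately applies to each and gives $l_K(\bar\gamma_i) \geq \ln\frac{H}{h(x_i)} - C(H - h(x_i))$. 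Since $[0,t_1]$ and $[t_2,1]$ are disjoint and length is additive, $l_K(\gamma) \geq l_K(\bar\gamma_1) + l_K(\bar\gamma_2)$; the middle piece $\gamma|_{[t_1,t_2]}$ --- exactly the part that may leave $N_{\delta_0}$ --- is simply discarded, which costs nothing in a lower bound. This two-cut trimming is the ingredient your argument is missing.
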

\begin{proof} Choose $t_0\in [0,1]$ such that $h(\gamma(t_0))=\max_{z\in \gamma} h(z)$. Set $\gamma_1,\gamma_2$ be the two branches of the curve $\gamma$ corresponding to the subintervals $[0,t_0]$ and $[t_0,1]$.
Set also $\bar \gamma_1$ and $\bar \gamma_2$ to be the connected components on $\gamma_1$ and $\gamma_2$ joining $x_i$ to the closest points $y_i\in \gamma$ such that $h(y_i)=d_{CC}(p_1,p_2)$, for $i=1,2$. More formally, $y_1=\gamma(t_1)$, with $t_1=\inf\{t\in [0,t_1]  \ $such that $h(\gamma(t))\ge d_{CC}(p_1,p_2)\}$. The point $y_2$ is defined analogously.

Next we invoke Lemma~\ref{bb-one is a base point} to deduce
\begin{eqnarray*}
l_K(\gamma)&\ge& l_K(\bar \gamma_1)+ l_K(\bar \gamma_2) \\
\\ &\ge& \ln \frac{d_{CC}(p_1,p_2)}{h(x_1)} +  \ln \frac{d_{CC}(p_1,p_2)}{h(x_2)} - C(2d_{CC}(p_1,p_2)- h(x_1)-h(x_2)) \\
&= &2 \ln \bigg( \frac{d_{CC}(p_1,p_2)}{\sqrt{h(x_1)h(x_2)} } \bigg)- C(2d_{CC}(x_1,x_2)- h(x_1)-h(x_2)),
\end{eqnarray*}
which is the desired bound \eqref{bound-above-h}.
\end{proof}
\bigskip

\subsection{Proof of Theorem~\ref{bb-better}}
Thanks to the previous lemmata we can now prove the main result of the section.

\begin{proof}[Proof of Theorem~\ref{bb-better}]
We shall show that  for  all $\bar p\in  \p D$ and $\e>0$ one can choose 
  $r>0$ small enough so that 
    for all distinct $p,q\in \p D\cap B(\bar p,r)$  
one can find
    $r'\in(0, r)$ 
such that
    \eqref{let-me-tell-ya} holds    for all $ x \in  D\cap B(p,r') $ and all $y \in   D\cap B(q,r') $.
    In our proof we begin with arbitrary values of $r$ and $r'$ and then put several constrains on them.

 If $p$ and $q$ are distinct, then the value $d_1:=d_{CC}(p,q)$ is strictly positive.
 We shall choose $r$ smaller that the constants $\delta_0$ in Propositions~\ref{bb-background} and \ref{bb-estimate} and so that  $d_1 $ is small enough to be determined later. 
 Denote by $\bar x$, and $\bar y$ the projections on the boundary of $x$ and $y$, respectively. Note that since the projections are the closest points in $\p D$, then $\bar x\in B(p,2r')$ and $\bar y\in B(q,2r')$.
 Set $d_2:=d_{CC}(\bar x, \bar y)$.
 Notice that as $r'\to 0$ we have $d_2\to d_1$.
 We shall choose $r' $ sufficiently small  so that $r' <d_2$ and
$d_2\in (d_1/2, 2 d_1)$. In particular, 
if $r$ was chosen small enough, then $d_2$ is positive and smaller than the constants $\delta_0$ in Propositions~\ref{bb-background} and \ref{bb-estimate}. 

\noindent {\it Proof of the upper bound in   \eqref{let-me-tell-ya}.}  Set $x':= \bar x - d_2  n(\bar x)$ and $y':= \bar y - d_2  n(\bar y)$, so $x'$, $y'$ are points  in $D$ at   distance $d_2$ from $\p D$ and with the same projection on $\p D$ as $x$, $y$, respectively.
 
By Lemma~\ref{bb-both are arbitrarily close2} we can choose   $d_1$ sufficiently   small   so that
$d_K(x', y')<\eps/3$. 
 Invoking \eqref{quattrodieci}, since $h(x')=h(y')=d_2>\max\{h(x), h(y)\}$, yields $$d_K(x,x') \leq \log(d_2/h(x)) + C( d_2 - h(x) )\qquad \text{and }\qquad 
 d_K(y,y') \leq \log(d_2/h(y)) + C( d_2 - h(y) )
 .$$
 Choose $d_1$   chosen sufficiently small so that $C d_2 \leq \eps/3$.
  
 
Combining the previous bounds with the definition of $g$, we obtain the following estimates
\begin{eqnarray*}
d_K(x,y)-g(x,y)&\le& d_K(x,x')+d_K(x',y')+d_K(y',y)-g(x,y)
\\
&\le&   \log(d_2/h(x)) + C( d_2 - h(x) ) +
\eps/3+
\log(d_2/h(y)) + C( d_2 - h(y) )
   \\& & \hspace{6.6cm}- 2\ln \bigg( \frac{d_2 +   h(x) \wedge h(y))}{\sqrt{h(x) h(y)}}\bigg)\\
&\le& \eps -C h(x) - C h(y) - 2 \ln \bigg(   1 +   \frac{ h(x) \wedge h(y)}{d_2}  \bigg) < \eps,
 \end{eqnarray*}
where we used that the terms
$h(x),h(y) , \ln (   1 +   \frac{ h(x) \wedge h(y)}{d_2}  )  $ are positive. This conclude the proof of the upper bound in    \eqref{let-me-tell-ya}.
 
 \bigskip

\noindent {\it Proof of the lower bound in   \eqref{let-me-tell-ya}.}
Choose $\delta>0$ such that $\ln (1/(1+\delta))<\e$ and  
$r'>0$ small enough so that $\frac{\max(h(x),h(y))}{d_{CC}(p,q)} \le \delta$,  for all $ x \in  D\cap B(p,r') $ and all $y \in   D\cap B(q,r') $.
Consider any arc $\gamma:[0,1]\to D$  joining $x$ and $y$, and set $H:=\max_{z\in \gamma} h(z)$.

\medskip

\noindent{\it - If $H \ge d_{CC}(\bar x,\bar y)$ } then  in view of Lemma~\ref{curve goes far from boundary} we have
\begin{eqnarray}\nonumber
d_K(x,y)- g(x,y) &\geq& 2 \ln \big( \frac{d_2}{\sqrt{h(x)h(y)} } \big)- C(2d_2- h(x)-h(y)))- 2\ln \big( \frac{d_2 +   h(x) \wedge h(y))}{\sqrt{h(x) h(y)}}  \big)\\
\label{a-z}
&\ge& 2 \ln \bigg( \frac{1}{1+ \frac{\max(h(x),h(y))}{d_2}} \bigg) - C(2d_2-h(x)-h(y)) \\
 & \ge& - (C+2) \e.
 \nonumber
\end{eqnarray}
In this case the proof is concluded.

\medskip

\noindent{\it - If $H \le d_{CC}(\bar x,\bar y)$ }  then it follows that $H$ is smaller than the constants $\delta_0$ in Propositions~\ref{bb-background} and \ref{bb-estimate}. In particular we can assume without loss of generality that $CH<1/2$, where $C$ is as in \eqref{bb-1.4}.
Let $t_0\in [0,1]$ be such that $h(\gamma(t_0))=H$ and consider the two branches $\gamma_1,\gamma_2$ of $\gamma$ given by 
restrictions to $[0,t_0]$ and $[t_0,1]$. Given $\e>0$ as in the statement, let $\theta\in (1,2]$ so that $\ln \theta<\e$ and define $k\in \N$ such that $$h(\gamma(0))\in \Bigg[\frac{H}{\theta^k},   \frac{H}{\theta^{k-1}}\Bigg].$$
Following \cite{Balogh-Bonk}, we define $s_0,s_1,...,s_k\in [0,t_0]$ such that $s_0=0$ and $$s_l = \min \Bigg\{s\in [0,t_0] \ \text{ such that } h(\gamma(s))=  \frac{H}{\theta^{k-l}}\Bigg\}.$$
Set $t_1=s_k\le t_0$ and for each $l=1,...,k$,
$$\nu_l^{-1}=   \frac{d_{CC}(\bar x, \bar y) \cdot(\theta-1) }{ 8\theta^{k-l}}.$$
For each of the two branches $\gamma_1,\gamma_2$, we distinguish two alternatives:

\begin{itemize}
\item {\bf Alternative $\# 1$ (All sub-arcs have large slope)}  In this alternative we assume that for every $l=1,...,k$ one has 
\begin{equation}\label{alternative 1}
d_{CC}(\pi(\gamma(s_{l-1})), \pi(\gamma(s_l))) \le \nu_l^{-1} 
\end{equation}

\medskip
From the latter  we draw two conclusions. The first is a simple application of the triangle inequality,
\begin{multline}\label{A2-1}
d_{CC} (\bar z, \pi(\gamma(t_1)) \\
\le \sum_{l=1}^k d_{CC} (\pi(\gamma(s_{l-1})), \pi (\gamma(s_l))) 
\le   (\theta-1) \frac{d_{CC}(\bar x,\bar y)} {8 \theta^k}\sum_{l=1}^k \theta^l 
\le \frac{d_{CC}(\bar x,\bar y)}{4}.\tag{A1 (i)}
\end{multline}
  
  On the other hand, in view of Lemma~\ref{bb-one is a base point} one has
  \begin{equation}\label{A2-2}
  l_K(\gamma|_{[0,t_1]}) \ge \ln \frac{h(\gamma(t_1))}{h(x)} -C(h(\gamma(t_1))-h(x)) = \ln \frac{H}{h(x)} -C(H-h(x)) .\tag{A1 (ii)}
  \end{equation}
  
  \bigskip
  
\item  {\bf Alternative $\# 2$   (One sub-arc has small slope)}  In this alternative, we assume that there exists  $l\in\{1,...,k\}$ such that
\begin{equation}\label{alternative 2}
d_{CC}(\pi(\gamma(s_{l-1})), \pi(\gamma(s_l))) > \nu_l^{-1} 
\end{equation}

Note that if $s\in [s_{l-1},s_l]$ then from the definition of the points $s_l$, one has
$$h(\gamma(s)) \le \theta^{l-k}H\le \frac{8}{\theta-1} \nu_l^{-1}.$$
We then claim that there exists 
a constant $\mathcal C>0$ depending only on the defining function $\defun$ such that
\begin{equation}\label{approx-and-shout} 
  l_K(\gamma|_{[s_{l-1},s_l]}) 
\mathcal C (\theta-1)^2 \frac{d_{CC}(\bar x,\bar y)}{H}.
\end{equation}
Indeed,
arguing as in \cite[page 521]{Balogh-Bonk} we invoke \eqref{bb-1.4} and Lemma \ref{approx}
and we bound as follows:
\begin{eqnarray*}
\nonumber & & \hspace{-1cm} l_K(\gamma|_{[s_{l-1},s_l]})  \\
 & \ge& \mathcal C \frac{(1-CH)\theta^{k-l}}{H} 
\int_{s_{l-1}}^{s_l} \bigg[
L_{\defun}(\pi(\gamma(s)), [\pi(\gamma(s))]'_H) 
+(\theta-1)^2 \nu_l^{2} |[\pi(\gamma(s))]'_N|^2  \bigg]^{\frac{1}{2} }ds \\
\nonumber &\ge& \frac{\mathcal C}{2}(\theta-1) \frac{\theta^{k-l}}{H} 
\int_{s_{l-1}}^{s_l} \bigg[
L_{\defun}(\pi(\gamma(s)), [\pi(\gamma(s))]'_H) 
+ \nu_l^{2} |[\pi(\gamma(s))]'_N|^2  \bigg]^{\frac{1}{2} }ds
\\ \nonumber &\ge& \mathcal C (\theta-1) \frac{\theta^{k-l}}{H} d_{\nu_l} (\pi(\gamma(s_{l-1})), \pi(\gamma(s_l)))
\\ \nonumber &\ge& \mathcal C (\theta-1) \frac{\theta^{k-l}}{H} d_{CC} (\pi(\gamma(s_{l-1})), \pi(\gamma(s_l))) \\
\nonumber&\ge&
\mathcal C (\theta-1)^2 \frac{d_{CC}(\bar x,\bar y)}{H},
\end{eqnarray*}
where $ d_{\nu_l}$ denotes the approximation of the Carnot-Caratheodory metric defined in
\eqref{app-def}. 

Next we claim that 
\begin{equation}\label{A2}
l_L(\gamma|_{[0,t_1]})
\ge   \ln \bigg(   \frac{H}{h(y)} \bigg) + \frac{\mathcal C (\theta-1)^2}{H} d_{CC}(\bar x,\bar y)- 
C\big(H -h(y)\big) -\e.\tag{A2}
\end{equation}
Indeed, Lemma~\ref{bb-one is a base point} and \eqref{approx-and-shout} 
 yields  
\begin{eqnarray*} 
l_L(\gamma|_{[0,t_1]})&=& l_K(\gamma|_{[0,s_{l-1}]}) + l_K(\gamma|_{[s_{l-1},s_l]}) + l_K(\gamma|_{[s_l,t_1]} ) \\
&\ge& \ln \bigg(  \frac{H}{h(\gamma(s_l))} \frac{h(\gamma(s_{l-1}))}{h(x)} \bigg) + \frac{\mathcal C (\theta-1)^2}{H} d_{CC}(\bar x,\bar y)
\\ && \hspace{6cm}
-C\big(H-h(\gamma(s_{l}) + h (\gamma(s_{l-1})) -h(x)\big)
 \\
&\ge & \ln \bigg(   \frac{H}{h(x)} \theta^{-1} \bigg) + \frac{\mathcal C (\theta-1)^2}{H} d_{CC}(\bar x,\bar y)- 
C\big(H -h(x)\big) \\
&\ge &  \ln \bigg(   \frac{H}{h(x)} \bigg) + \frac{\mathcal C (\theta-1)^2}{H} d_{CC}(\bar x,\bar y)- 
C\big(H -h(x)\big)-\e. 
\end{eqnarray*}
\end{itemize}

\bigskip
Applying similar consideration to the branch $\gamma_2$ one obtains a $t_2\in [t_0,1]$ such that one of the following two alternatives hold: Either

\begin{equation}\label{B1}
d_{CC} (\bar y, \pi(\gamma(t_2)) \\
\le  \frac{d_{CC}(\bar x,\bar y)}{4} \ \text{ and } \ \ 
  l_K(\gamma|_{[t_2,1]}) \ge \ln \frac{H}{h(y)} -C(H-h(y)) . \tag{B1}
  \end{equation}
or

\begin{equation}\label{B2}
l_L(\gamma|_{[t_2,1]})
\ge   \ln \bigg(   \frac{H}{h(y)} \bigg) + \frac{\mathcal C (\theta-1)^2}{H} d_{CC}(\bar x,\bar y)- 
C\big(H -h(y)\big) -\e.\tag{B2}
\end{equation}
 
 To conclude the proof we need to examine all possible combinations of these alternatives.
 We will show that in each case one obtains
  \begin{equation}\label{final-fornow} l_K(\gamma)\ge 
 2 \ln \bigg(\frac{d_{CC}(\bar x,\bar y)}{\sqrt{h(x)h(y)}}  \bigg)  - C( 2d_{CC}(\bar x, \bar y) -h(x)-h(y)) -\e .\end{equation}

 \begin{itemize}
 
 \item {\bf Suppose both (A1) and (B1) hold.} Observe that 
 \begin{eqnarray*} 
 d_{CC}(\pi(\gamma(t_1)),\pi(\gamma(t_2))) &\ge& d_{CC}(\bar x,\bar y) - d_{CC}(\bar x,\pi(\gamma(t_1))) - d_{CC}(\bar y,\pi(\gamma(t_2))) \\ &\ge& \frac{ d_{CC}(\bar x,\bar y)}{2}. \end{eqnarray*}
 Repeating the argument in \eqref{approx-and-shout} for $l=k$ and invoking the Riemannian approximation lemma \cite[Lemma 2.2]{Balogh-Bonk} one has
 $$l_L(\gamma|_{[t_1,t_2]}) \ge \mathcal C (\theta-1) \frac{d_{\nu_k}(\pi(\gamma(t_1)),\pi(\gamma(t_2)))}{H}  \ge \mathcal C (\theta-1) \frac{d_{CC}(\bar x,\bar y)}{H}.$$
 The latter, together with \eqref{A2-2}, and the second inequality in \eqref{B1} yields
 $$l_K(\gamma) \ge 2 \ln \bigg(\frac{H}{\sqrt{h(x)h(y)}}  \bigg) + \mathcal C (\theta-1)  \frac{d_{CC}(\bar x,\bar y)}{H} - C( 2H-h(x)-h(y)).$$
 Since the right hand side is monotone decreasing in $H\le d_{CC}(\bar x,\bar y)$ then one has
 \begin{eqnarray*}
  l_K(\gamma)&\ge& 
 2 \ln \bigg(\frac{d_{CC}(\bar x,\bar y)}{\sqrt{h(x)h(y)}}  \bigg) + \mathcal C (\theta-1) - C( 2d_{CC}(\bar x,\bar y) -h(x)-h(y)) \\
& \ge& 2 \ln \bigg(\frac{d_{CC}(\bar x,\bar y)}{\sqrt{h(x)h(y)}}  \bigg)  - C( 2d_{CC}(\bar x,\bar y) -h(x)-h(y))  \end{eqnarray*}
 completing the proof of \eqref{final-fornow}.
  \item {\bf Suppose both (A1) and (B2) hold.} One immediately has
  \begin{multline*} l_K(\gamma)\ge l_K(\gamma_{[0,t_1]}) +  l_K(\gamma|_{[t_2,1]})
  \\
  \ge   \ln \bigg(   \frac{H}{h(x)} \bigg) + \frac{\mathcal C (\theta-1)}{H} d_{CC}(\bar x,\bar y)- 
C[H -h(x)]-\e +
    \ln \frac{H}{h(y)} -C(H-h(y)).
  \end{multline*}
  Applying the same consideration as above we immediately deduce \eqref{final-fornow}.
   \item {\bf Suppose both (A2) and (B1) hold.} This combination  is dealt with analogously to the previous case.
    \item {\bf Suppose both (A2) and (B2) hold.} Estimate \eqref{final-fornow} follows immediately from \eqref{A2} and \eqref{B2}.
 \end{itemize}

To conclude the proof we need to consider the infimum of $l_K(\gamma)$ among all arcs $\gamma$ joining $x$ and $y$ and apply \eqref{final-fornow} to each. One has
\begin{eqnarray*}
d_K(x,y)-g(x,y)& \ge&
 2 \ln \bigg(\frac{d_{CC}(\bar x,\bar y)}{\sqrt{h(x)h(y)} }
 \frac{1}{ \frac{d_{CC}(\bar x,\bar y)}{\sqrt{h(x)h(y)}} +\frac{\max\{h(x),h(y)\} }{\sqrt{h(x)h(y)}  }}
   \bigg)  \\&&\hspace{5cm}  - C( 2d_{CC}(\bar x,\bar y) -h(x)-h(y)) -\e \\
   &=& -2 \ln \bigg( 1 +\frac{\max\{h(x),h(y)\} }{d_{CC}(\bar x,\bar y)  }
   \bigg)  \\ &&\hspace{5cm} - C( 2d_{CC}(\bar x,\bar y) -h(x)-h(y)) -\e .
\end{eqnarray*}
The proof is then concluded by applying the same argument as in  \eqref{a-z}.
 \end{proof}

\section{Local biLipschitz equivalence of Bourdon functions and proof of main result}\label{loclip}

In this section we prove  Proposition~\ref{p2} and the main result, Theorem~\ref{main}.

 \begin{proof}[Proof of Proposition~\ref{p2}]  
 Let $\bar p$  as in the statement and choose $\e>0$ such that $\exp(\frac{3}{2}\e)\le 1+\bar \e$.
 Invoke Theorem~\ref{bb-better}  in correspondence to the choice of $\bar p $ and $\e$, to obtain the value  $r>0$ and select any 
      $\omega \in \p D\cap B(\bar p,r)\setminus \{\bar p\}$. In correspondence to this choice of $\omega$, Theorem~\ref{bb-better} yields  a smaller radius $0<r'<r$, 
so that if we  choose  $y\in D\cap B(\bar p,r')$ and $o\in D\cap B(\omega,r')$  and then apply Theorem~\ref{bb-better} to the quintuplet $(\bar p, \bar p, \omega, y,o)$ 
we obtain
 $$|g(y,o)-d_K(y,o)|<\eps, \qquad \text{ for all } y\in D\cap B(\bar p,r') ,\ \text{  and }  o\in D\cap B(\omega,r')$$

Next,  given $p,q\in \p D\cap B(\bar p, r')$ we similarly use
 Theorem ~\ref{bb-better}
to infer the existence of a  $r''>0$ for which, applying Theorem~\ref{bb-better} to the quintuplet $(\bar p,   p, q   x, y )$
 $$|d_K(x,y) -g(x,y) |\leq \eps , \qquad \text{ for all } 
  x\in D\cap B( p,r'')
  ,\, \text{ and for all } 
   y\in D\cap B(q,r'')
. $$ 
If $x_i$ (resp., $y_i$) is a sequence in $D$ converging to $p$ (resp., $q$), then for $i$ large enough
$   x_i\in D\cap B( p,r'')
$ and $
   y_i\in D\cap B(q,r'')$
   and
   $
    x_i ,
   y_i\in   B(\bar p,r')$. From the above bounds one obtains   
 \begin{eqnarray*}
\left|\langle y_i,x_i\rangle_o^g- \langle y_i,x_i\rangle_o^K \right|
&=&
  \frac{1}{2} \left| g(y_i,o)-d_K(y_i,o) + g(x_i,o)-d_K(x_i,o)+d_K(x_i,y_i) -g(x_i,y_i) \right| \\
  &\leq &   \frac{3}{2}\eps.
\end{eqnarray*} 
Consequently, if the sequences  $x_i$,   $y_i$ are taken so that $\langle p,q\rangle_o^g = \lim_{i\to \infty}  \langle y_i,x_i\rangle_o^g $, we have
 \begin{eqnarray*}
\dfrac{ \rho^K_o(p,q)}{\rho^g_o(p,q)}
&\leq& \frac{\lim_{i\to \infty} \exp(-\langle y_i,x_i\rangle_o^K)}{ \lim_{i\to \infty}\exp(-\langle y_i,x_i\rangle_o^g)}
\\
&=&\lim_{i\to \infty}  \exp\left(   \langle y_i,x_i\rangle_o^g- \langle y_i,x_i\rangle_o^K \right)
\\
&\leq &
\exp( \frac{3}{2}\eps) \le 1+\bar \e.
\end{eqnarray*}
And similarly, 
$ {\rho^g_o(p,q)}/{ \rho^K_o(p,q)} $ is bounded by 
 $1+
 \bar\eps$.
\end{proof}

 \begin{proof}[Proof of  Theorem~\ref{main}] 
    For any $\bar p\in  \p D_1$ and $\bar \e>0$ we show that the boundary extension is $(1+\bar \e)-$quasi-conformal at $\bar p$, i.e.  $ H^*(\bar p,F, d_{CC}, d_{CC})\le 1+\bar \e$,   where $H^*$ is defined as in \eqref{hstella}. 
    Following the  diagram \eqref{diagram} in the introduction, from \eqref{chain}  for every  $o\in D_1$  
    we have 
      \begin{multline}\label{dilations}
 H^*(\bar p,F, d_{CC}, d_{CC}) \\ \le 
 H^*(\bar p, {\rm Id}_{\p D_1}  , d_{\rm CC}, \rho^{g}_o)
 H^*(   \bar p, {\rm Id}_{\p D_1}  , \rho^g_o, \rho^K_o)
 H^*(\bar p, F, \rho^{K}_o, \rho^{K}_{f(o)})  \\ \cdot 
 H^*(F(\bar p), {\rm Id}_{\p D_2},\rho^K_{f(o)}, \rho^g_{f(o)} )
 H^*(F(\bar p),{\rm Id}_{\p D_2}, \rho^{g}_{f(o)}, d_{\rm CC}).
  \end{multline}

 Start by observing that   for any $o\in D_1$  the pointed metric spaces
 $(D_1, d_{K}, o)$ and $(D_2,d_{K}, f(o))$ are isometric. Thus they give rise to visual boundaries that are isometric with respect to the induced distances $\rho^{K}_o$ an $\rho^{K}_{f(o)}$, as defined in \eqref{def:Bourdon:dist}.  Consequently the induced extension map $F:(\p D_1, \rho^{K}_o) \to (\p D_2, \rho^{K}_{f(o)})$ is an isometry, and hence from     \eqref{biLip}
\begin{equation}\label{H3}H^*(\bar p, F, \rho^{K}_o, \rho^{K}_{f(o)}) =1.\end{equation}
%
%
%

 Regarding the first and last term in the right-hand side of \eqref{dilations},
  in view of Proposition~\ref{p1} we have that 
  \begin{equation}\label{H15}H^*(\bar p, {\rm Id}_{\p D_1}  , d_{\rm CC}, \rho^{g}_o)=H^*(F(\bar p),{\rm Id}_{\p D_2}, \rho^{g}_{f(o)}, d_{\rm CC})=1.
  \end{equation}

 We shall then prove that
\begin{equation}\label{H24} H^*(   \bar p, {\rm Id}_{\p D_1}  , \rho^g_o, \rho^K_o)
\leq 1+\bar \eps \, \text{ and } \,
 H^*(F(\bar p), {\rm Id}_{\p D_2},\rho^K_{f(o)}, \rho^g_{f(o)} ) \leq 1+\bar \eps,
 \end{equation}
 for some suitable choice of $o$.
 To prove this we will need to invoke Proposition~\ref{p2} twice, in $D_1$ and in $D_2$, together with the observation \eqref{biLip}. 
 Namely, we shall prove that
 for a suitable choice of $o$ 
 The maps considered in \eqref{H24} are $( 1+\bar \e )$-biLipschitz in a neighborhood of the considered points.

 First we apply Proposition~\ref{p2}  in a neighborhood of $F(\bar p) \in \p D_2$, thus yielding $r_2>0$ such that for all $\omega_2\in \p D_2\cap B(F(\bar p), r_2)\setminus \{ F(\bar p)\}$ there exists $r_2'>0$ such that for all  $o_2\in D_2 \cap B(\omega_2,r_2')$ one has that $\rho_{o_2}^g$ and $\rho_{o_2}^K$ are $(1+\bar \e)$-biLipschitz in $\p D_2\cap B(F(\bar p), r_2')$. For the moment we do not choose any specific $\omega_2$ and $o_2$, so $r_2'$ is still to be determined.

Next, we apply Proposition~\ref{p2}  to  $D_1$ in a neighborhood of $\bar p$ and use it to choose    $r_1>0$ such that   for all $\omega_1\in \p D_1\cap B(\bar p, r_1)\setminus \{ \bar p\}$ there exists $r_1'>0$ such that $o_1\in D_1 \cap B(\omega_1,r_1')$ one has that $\rho_{o_1}^g$ and $\rho_{o_1}^K$ are $(1+\bar \e)$-biLipschitz in $\p D_1\cap B(\bar p, r_1')$. 
By continuity of the map $F$ we may have chosen $r_1$ small enough that 
$F(B(\bar p, r_1)\cap D_1)\subset B(F(\bar p), r_2)\cap D_2$.

We set $\omega_2:=F(\omega_1)$, which is then in $ B(F(\bar p), r_2)\cap D_2$ and is different than $F(\bar p)$ since $F$ is a homeomorphism.
 Now we fix   $r_2'$ accordingly, as we explained above.  If needed we will select a smaller value for $r_1'$ so that we can assume $F(B(\omega_1,r_1')\cap D_1)\subset B(F(\omega_1), r_2')\cap D_2$.

To conclude, we can now select any base point $o\in B(\omega_1,r_1')\cap D_1$, so that $f(o)\in  B(\omega_2, r_2')\cap D_2$ and 
and hence
$\rho_{o_1}^g$ and $\rho_{o_1}^K$ are $(1+\bar \e)$-biLipschitz in $\p D_1\cap B(\bar p, r_1')$
and $\rho_{o_2}^g$ and $\rho_{o_2}^K$ are $(1+\bar \e)$-biLipschitz in $\p D_2\cap B(F(\bar p), r_2')$.
Thus, \eqref{biLip} gives \eqref{H24}.

Using the estimates \eqref{H3}, \eqref{H15}, and \eqref{H24} in \eqref{dilations} we get
$ H^*(\bar p,F, d_{CC}, d_{CC})  \le 1+\bar \eps$.
By the arbitrariness of $\bar \eps$ we deduce 
$H^*(\bar p,F, d_{CC}, d_{CC})  = 1$.
Finally, from Lemma \ref{lemma:CLO} we conclude.
  \end{proof}

 \bibliography{Capogna_LeDonne-BIBLIO}
\bibliographystyle{amsalpha}

\end{document}